
\documentclass{amsart}
\usepackage[foot]{amsaddr}

\usepackage[left=1in,right=1in,top=1in,bottom=1in]{geometry}
\setlength{\headheight}{23pt}
\usepackage{tikz}
\usetikzlibrary{shapes,snakes,calc,arrows}
\usetikzlibrary{decorations.pathreplacing,shapes.geometric}
\usetikzlibrary{calc,positioning}

\usepackage{amsrefs}
\usepackage{amsthm}
\usepackage{amscd}
\usepackage{amsfonts}
\usepackage{amsmath}
\usepackage{amssymb}
\usepackage{mathrsfs}
\usepackage{enumitem}
\usepackage{multirow}
\usepackage{verbatim}
\usepackage{url}
\usepackage{graphicx}
\usepackage{color}

\usepackage{ifthen}

\usepackage{microtype}
\usepackage{hyperref}

\usepackage{wasysym} 

\usepackage{breqn}

\usepackage[T1]{fontenc} 


\usepackage{tipa}
\newcommand{\blob}{vaccine}
\newcommand{\fpf}{\paren{(\A_\ell^{(\iota)}, \A_r^{(\iota)})}_{\iota\in\I}}

\definecolor{blue1}{rgb}{0.3 0.2 0.9}
\definecolor{green1}{rgb}{0.2 0.8 0.4}
\definecolor{blue2}{rgb}{0.2 0.8 1}
\definecolor{green2}{rgb}{0.0 0.6 0.0}
\definecolor{dgrey}{rgb}{0.1 0.1 0.1}
\definecolor{lgrey}{rgb}{0.85 0.85 0.85}

\def\greyscale{1}
\ifthenelse{\greyscale>0}{
	\def\palette{{"dgrey", "lgrey"}}
	\definecolor{rline}{rgb}{0 0 0}
}{
	\def\palette{{"blue2", "green2"}}
	\definecolor{rline}{rgb}{1 0 0}
}

\definecolor{ggreen}{HTML}{00BB33}

\begin{document}


\newcommand{\supp}{\text{supp}}
\newcommand{\Aut}{\text{Aut}}
\newcommand{\Gal}{\text{Gal}}
\newcommand{\Inn}{\text{Inn}}
\newcommand{\Irr}{\text{Irr}}
\newcommand{\Ker}{\text{Ker}}
\newcommand{\N}{\mathbb{N}}
\newcommand{\Z}{\mathbb{Z}}
\newcommand{\Q}{\mathbb{Q}}
\newcommand{\R}{\mathbb{R}}
\newcommand{\C}{\mathbb{C}}
\renewcommand{\H}{\mathcal{H}}
\newcommand{\B}{\mathcal{B}}
\newcommand{\A}{\mathcal{A}}
\newcommand{\Y}{\mathcal{Y}}
\newcommand{\X}{\mathcal{X}}
\newcommand{\I}{\mathcal{I}}
\newcommand{\K}{\mathcal{K}}
\newcommand{\M}{\mathcal{M}}
\newcommand{\E}{\mathcal{E}}
\newcommand{\e}{\epsilon}
\newcommand{\GG}{\Gamma\Gamma}
\newcommand{\GGab}{\GG\paren{(a_1, b_1), \ldots, (a_n, b_n); m, k, \e}}
\newcommand{\GGo}{\underline{\Gamma\Gamma}}
\newcommand{\GGoab}{\GGo\paren{(a_1, b_1), \ldots, (a_n, b_n); m, k, \e}}

\newcommand{\J}{\mathscr{J}}
\newcommand{\D}{\mathscr{D}}

\renewcommand{\a}{\alpha}

\newcommand{\ul}[1]{\underline{#1}}

\newcommand{\II}{\text{II}}
\newcommand{\III}{\text{III}}

\newcommand{\<}{\left\langle}
\renewcommand{\>}{\right\rangle}
\renewcommand{\Re}[1]{\text{Re}\ #1}
\renewcommand{\Im}[1]{\text{Im}\ #1}
\newcommand{\dom}[1]{\text{dom}\,#1}
\renewcommand{\i}{\text{i}}
\newcommand{\alg}{\operatorname{alg}}
\newcommand{\mb}[1]{\mathbb{#1}}
\newcommand{\mc}[1]{\mathcal{#1}}
\newcommand{\mf}[1]{\mathfrak{#1}}
\newcommand{\mr}{\mathrm}
\newcommand{\im}{\operatorname{im}}

\newcommand{\lat}{\mathrm{lat}}
\newcommand{\lrleq}{\leq_{\mathrm{lat}}}
\newcommand{\lrgeq}{\geq_{\mathrm{lat}}}

\newcommand{\paren}[1]{\left(#1\right)}
\newcommand{\ang}[1]{\left<#1\right>}
\newcommand{\set}[1]{\left\{#1\right\}}
\newcommand{\sq}[1]{\left[#1\right]}
\newcommand{\abs}[1]{\left|#1\right|}
\newcommand{\norm}[1]{\left\Vert#1\right\Vert}

\newcommand{\makeaball}[1]{\begin{tikzpicture}\pgfmathparse{\palette[#1]}\shade[ball color=\pgfmathresult] (0, 0) circle (0.07);\end{tikzpicture}}

\newcommand{\taur}{\text{\taurus}}


\newtheorem{thm}{Theorem}
\newtheorem{prop}[thm]{Proposition}
\newtheorem{lem}[thm]{Lemma}
\newtheorem{cor}[thm]{Corollary}
\newtheorem{innercthm}{Theorem}
\newenvironment{cthm}[1]
  {\renewcommand\theinnercthm{#1}\innercthm}
  {\endinnercthm}
\newtheorem{innerclem}{Lemma}
\newenvironment{clem}[1]
  {\renewcommand\theinnerclem{#1}\innerclem}
  {\endinnerclem}

\theoremstyle{definition}
\newtheorem{defn}[thm]{Definition}
\newtheorem{ex}[thm]{Example}
\newtheorem{nota}[thm]{Notation}
\newtheorem{exam}[thm]{Example}
\newtheorem{rem}[thm]{Remark}
\newtheorem{innercdefi}{Definition}
\newenvironment{cdefi}[1]
  {\renewcommand\theinnercdefi{#1}\innercdefi}
  {\endinnercdefi}
\newtheorem{cons}[thm]{Construction}


\author{Ian~Charlesworth}
\address{University of California, Berkeley\\Department of Mathematics\\Berkeley, CA, USA, 94720-3840}
\email{ilc@math.berkeley.edu}
\thanks{Research supported by NSF grants DMS-1161411 and DMS-1500035, and NSERC award PGS-6799-438440-2013.} 

\keywords{Bi-free independence, free probability, free Brownian motion, operator algebras}

\title{An alternating moment condition for bi-freeness.}

\begin{abstract}
	In this note we demonstrate an equivalent condition for bi-freeness, inspired by the well-known ``vanishing of alternating centred moments'' condition from free probability.
	We show that all products satisfying a centred condition on maximal monochromatic $\chi$-intervals have vanishing moments if and only if the family of pairs of faces they come from is bi-free, and show that similar characterisations hold for the amalgamated and conditional settings.
	In addition, we construct a bi-free unitary Brownian motion and show that conjugation by this process asymptotically creates bi-freeness; these considerations lead to another characterisation of bi-free independence.
\end{abstract}

\maketitle

\section{Introduction.}
Bi-free probability was introduced by Voiculescu in \cite{voiculescu2014free} as a generalisation of free probability studying simultaneously left and right actions of algebras on a reduced free product space.
Voiculescu demonstrated that many notions from free probability generalise with appropriate care to this bi-free setting.
In particular, \cite{voiculescu2014free} demonstrated the existence of bi-free cumulant polynomials but did not produce explicit formulae for them.
Soon after, Mastnak and Nica in \cite{mastnak2015double} proposed a family of cumulant functionals, which the author together with Nelson and Skoufranis in \cite{2014arXiv1403.4907C} showed to agree with those abstractly given by Voiculescu.
In particular, \cite{2014arXiv1403.4907C} demonstrated that bi-freeness was equivalent to the vanishing of mixed cumulants.

Since then, many more techniques from free probability have been generalised to the bi-free setting: bi-free partial transforms were studied in \cite{voiculescu2016free, voiculescu2016freeiii, skoufranis2015combinatorial, skoufranis2016independences, huang2016analytic}; infinite divisibility and a bi-free L\'evy-Hin\v{c}in formula in \cite{gu2015analogue}; bi-matrix models in \cite{skoufranis2015some}; and so on.
One major difficulty in generalising results to bi-free probability, however, has been that the condition defining bi-freeness is somewhat unwieldy.
Free independence is equivalent to saying that alternating products of centred elements are themselves centred, but in bi-free probability it has been necessary to work either with cumulants (and hence compute the M\"obius function for the lattice of bi-non-crossing partitions) or to compute moments through abstract bi-free products.
In this note, we demonstrate an appropriate bi-free analogue of the freeness condition.
We also examine the extension of these techniques to conditional bi-free probability as studied by Gu and Skoufranis \cite{gu2016conditionally}, and to certain operator-valued bi-free settings as considered in \cite{Charlesworth:2015aa,voiculescu2014free}.

Moreover, we consider a bi-free analogue of Biane's free multiplicative Brownian motion, which is a free stochastic process that may be constructed as the solution to a free stochastic equation involving free (additive) Brownian motion, or as a limit of the Markov process arising from the heat semi-group on finite dimensional unitary matrices \cite{biane1997free}.
A free multiplicative Brownian motion $U(t)$ converges in moments to a Haar unitary operator as $t\to\infty$; we introduced bi-free multiplicative Brownian motion, a pair of stochastic processes which behave similarly and converge in moments to a bi-Haar unitary in the sense of \cite{Charlesworth:2015aa}.
Conjugation by a Haar unitary moves algebras into free position, and conjugating pairs of faces by bi-Haar unitaries moves them into bi-free position: thus conjugating by (bi-)free multiplicative Brownian motion asymptotically creates (bi-)free independence as $t\to\infty$, and so may be thought of as a ``liberation process'' (\emph{cf.} \cite{Voiculescu1999101} for the free case).
We examine the effect of conjugation by bi-free multiplicative Brownian motion for small $t$, and show that the derivative at time $0$ may be computed through a combinatorially-described map $\taur$, which bears relation to Voiculescu's free liberation gradient \cite{Voiculescu1999101}.
Incidentally, this gives us another characterization of bi-free independence in terms of the vanishing of the state composed with this gradient.

In addition to this introduction, this note contains three sections.
Section~\ref{sec:prelim} recalls some preliminaries of bi-free probability.
Section~\ref{sec:vaccine} defines the \emph{vaccine} property, and shows that it is equivalent to bi-freeness.
Finally, Section~\ref{sec:liberation} defines bi-free unitary Brownian motion, and uses it to define the liberation process.

\subsection*{Acknowledgements.}
The author would like to thank Brent Nelson and Dimitri Shlyakhtenko for useful discussions and remarks on earlier drafts of this note.
Significant progress on this paper was made during the ``von Neumann Algebras'' Hausdorff Trimester Program and the author is grateful for the hospitality and support of the Hausdorff Research Institute for Mathematics in Bonn, Germany.

\section{Preliminaries.}
\label{sec:prelim}
A \emph{non-commutative probability space} is a pair $(\A, \varphi)$ with $\A$ a unital $*$-algebra, and $\varphi$ a state on $\A$.
A \emph{pair of faces} in $\A$ is a pair of unital sub-algebras $(\A_\ell, \A_r)$ of $\A$.
A family of pairs of faces $(\A_\ell^{(\iota)}, \A_r^{(\iota)})_{\iota \in \I}$ is said to be \emph{bi-freely independent} if they can be represented using left and right actions on a free product of vector spaces such that the corresponding joint distribution matches their joint distribution in $\A$; see \cite{voiculescu2014free} for more details.
It was shown in \cite{voiculescu2014free} that bi-free independence uniquely determines all joint moments of the family of pairs of faces in terms of pure moments consisting of terms coming from single pairs of faces.

We now introduce some notation and combinatorial objects used in \cite{2014arXiv1403.4907C}, and the reader wishing a more careful treatment is encouraged to look there.
Given a map $\chi : \set{1, \ldots, n} \to \set{\ell, r}$, we enumerate $\chi^{-1}(\ell) = \set{i_1 < \cdots < i_p}, \chi^{-1}(r) = \set{i_{p+1} > \cdots > i_n}$, and define the permutation $s_\chi \in \mathcal{S}_n$ by $s_\chi(k) = i_k$.
We will write $i \prec_\chi j$ if $s_\chi^{-1}(i) < s_\chi^{-1}(j)$, and call a set $I \subseteq \set{1, \ldots, n} $ a \emph{$\chi$-interval} if it is an interval under the ordering $\prec_\chi$, or equivalently if the set $s_\chi^{-1}\cdot I$ is an interval in the usual sense.
The set of \emph{bi-non-crossing partitions corresponding to $\chi$} is then
$$BNC(\chi) := \set{\pi \in \mathcal{P}(n) : s_\chi^{-1}\cdot\pi \in NC(n)} = \set{\pi \in \mathcal{P}(n) : \substack{i\prec_\chi j \prec_\chi k \prec_\chi \ell\\\text{ and } i \sim_\pi k, j \sim_\pi \ell} \Rightarrow i \sim_\pi j}.$$
If nodes are placed on two adjacent lines according to $\chi$ and labelled appropriately, the bi-non-crossing partitions are precisely those which can be drawn without crossings.
We order the set of partitions $\mc{P}(n)$ (and hence $BNC(\chi)$) by refinement: $\pi < \sigma$ if and only if every block in $\sigma$ is a union of blocks in $\pi$.

Although technically the $BNC(\chi)$ are not disjoint (e.g., $\set{\set{1, \ldots, n}} \in BNC(\chi)$ for all $\chi$), it will be very convenient to treat them as formally different and allow $\pi \in BNC(\chi)$ to ``remember'' $\chi$.
To accomplish this, one could take the elements of $BNC(\chi)$ to be pairs $(\pi, \chi)$ and project onto the first coordinate at every turn.
The extra notation required, though, mainly serves to occlude the arguments and so we confine the subtlety to this paragraph.

Given a map $\e : \set{1, \ldots, n} \to \mathcal{I}$, we say a set $I$ is \emph{$\e$-monochromatic} if $\e|_I$ is constant.
Note that $\e$ induces a partition in $\mc{P}(n)$ by $\set{\e^{-1}(\iota) : \iota \in I} \setminus\set\emptyset$, and so we will sometimes write $\pi < \e$ for $\pi \in BNC(\chi)$.
This corresponds precisely to saying that every block in $\pi$ is $\e$-monochromatic.

\begin{exam}
	\label{ex:partition}
	Suppose $\chi$ and $\e$ are such that $\chi^{-1}(\ell) = \set{2,3,4,7}$, $\chi^{-1}(r) = \set{1,5,6,8}$, $\e^{-1}\paren{\makeaball{0}} = \set{1,2,4,7,8}$, and $\e^{-1}\paren{\makeaball{1}} = \set{3,5,6}$.
	\[\begin{tikzpicture}[baseline]
		\draw[thick, dashed] (-1,0.25) -- (-1, -3.75) -- (1,-3.75) -- (1,0.25);

		\def\colours{{0,0,1,0,1,1,0,0}}
		\def\sidez{{1,-1,-1,-1,1,1,-1,1}}
		\foreach \y in {0,...,7} {
			\pgfmathtruncatemacro{\nodename}{\y+1}
			\pgfmathtruncatemacro{\sd}{\sidez[\y]}
			\pgfmathparse{\palette[\colours[\y]]}
			\def\clr{\pgfmathresult}
			\node (ball\nodename) [draw, shade, circle, ball color=\clr, inner sep=0.07cm] at (\sd, -\y*0.5) {};
			\ifthenelse{\sd=1}{\node[right] at (\sd, -\y*0.5) {\nodename}}
					{\node[left] at (\sd, -\y*0.5) {\nodename}};
		}
		\draw[rline,thick] (ball4)++(-0.35,0) -- ++(-0.2,0) |- (0,-4);
		\draw[rline,thick] (ball8)++(0.35,0) -- ++(0.2,0) |- (0,-4);

		\draw (ball2) [right] -- (ball2 -| 0,0 ) |- (ball7) [right];
		\draw (ball5) [left] -- (ball5 -| 0,0 );
		\draw (ball6) [left] -- (ball6 -| 0.5,0 ) |- (ball8) [left];
		\draw (ball3) [right] -- (ball3 -| -0.5,0 ) |- (ball4) [right];
	\end{tikzpicture}\]
Then $\set{\set{1}, \set{2,5,7}, \set{3,4}, \set{6,8}} \in BNC(\chi)$ but is not a non-crossing partition in the usual sense.
Also, $\set{4,7,8}$ is a maximal $\e$-monochromatic $\chi$-interval.
We have $2\prec_\chi 3 \prec_\chi 4\prec_\chi 7\prec_\chi8\prec_\chi 6\prec_\chi5\prec_\chi1$, so $\prec_\chi$ corresponds to reading the order of the nodes around the diagram.
\end{exam}

The cumulants are multilinear functionals $\kappa_\chi : \A^n \to \C$ defined recursively by the moment-cumulant relation:
$$\varphi(z_1 \cdots z_n) = \sum_{\pi \in BNC(\chi)}\prod_{B\in\pi}\kappa_{\chi|_B}\paren{(z_1, \ldots, z_n)|_B}.$$
It is possible to explicitly define the cumulants in terms of moments using a M\"obius inversion on the lattice of bi-non-crossing partitions, but these details are not needed now.
For $B = \set{i_1 < \cdots < i_p}$ we denote $z_B := z_{i_1}\cdots z_{i_p}$.
Then for $\pi \in BNC(\chi)$ and for $P \subseteq \pi$ we define
$$\varphi_\pi(z_1, \ldots, z_n) = \prod_{B\in \pi} \varphi(z_B), \qquad \varphi_P(z_1, \ldots, z_n) = \prod_{B \in P}\varphi(z_B).$$
We use a similar notational convention for $\kappa_\pi$, so we may, for example, write $\varphi(z_1\cdots z_n) = \sum_{\pi\in BNC(\chi)}\kappa_\pi(z_1, \ldots, z_n)$.
The main result of \cite{2014arXiv1403.4907C} was the following theorem.

\begin{thm}[\cite{2014arXiv1403.4907C}*{Theorem 4.3.1}]
	Let $\paren{(\A_\ell^{(\iota)}, \A_r^{(\iota)})}_{\iota\in\mathcal{I}}$ be a family of pairs of faces.
	Then the family is bi-free if and only if all mixed cumulants vanish.
	That is, whenever $\chi:\set{1, \ldots, n} \to \set{\ell, r}$, $\e:\set{1, \ldots, n}\to\mathcal{I}$ is non-constant, and $z_1, \ldots, z_n \in \A$ have $z_i \in \A_{\chi(i)}^{(\e(i))}$, one has
	$$\kappa_\chi(z_1, \ldots, z_n) = 0.$$
\end{thm}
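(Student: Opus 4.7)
The plan is to deduce the equivalence from a uniqueness principle: both bi-freeness and the vanishing of all mixed cumulants will be shown to determine the joint distribution of $\paren{(\A^{(\iota)}_\ell, \A^{(\iota)}_r)}_{\iota\in\I}$ from the individual distributions of the pairs, and the two conditions will be verified to agree on at least one realization.

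First I would observe that the vanishing of mixed cumulants is a universal condition on joint moments. By the moment-cumulant relation
$$\varphi(z_1 \cdots z_n) = \sum_{\pi \in BNC(\chi)} \kappa_\pi(z_1, \ldots, z_n),$$
if every cumulant whose label restriction $\e|_B$ is non-constant on some block $B$ vanishes, then $\kappa_\pi = 0$ unless every block of $\pi$ is $\e$-monochromatic, so the sum collapses to a sum over $\pi \leq \e$. Each surviving block lies entirely within a single pair of faces, so M\"obius inversion recovers its cumulant from the marginal moments alone, and every joint moment becomes a universal polynomial in the marginal joint moments of the individual pairs. Since Voiculescu's construction likewise determines joint moments uniquely from marginal data, it suffices to verify on the canonical reduced free product realization of bi-free independence that all mixed cumulants vanish.

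The core of the argument is therefore this direct computation on the canonical model. I would expand each $z_i$ according to its action on the free product space, track the resulting product $z_1 \cdots z_n$ against the vacuum, and use the standard alternating-word orthogonality to cut down the terms that contribute. The left operators are inserted on one side and the right on the other, but the ordering $\prec_\chi$ on $\set{1,\ldots,n}$ is precisely the order in which the vacuum ``sees'' the letters contributed by these operators. One then identifies the surviving diagrams bijectively with partitions $\pi \in BNC(\chi)$ satisfying $\pi \leq \e$, and checks that the contribution of each such diagram is exactly $\kappa_\pi$. Comparing this with the full moment-cumulant expansion and inducting on the partition lattice (starting from the top partition $\set{\set{1,\ldots,n}}$, which fails $\pi\leq\e$ whenever $\e$ is non-constant) then yields $\kappa_\chi(z_1, \ldots, z_n) = 0$ in that case.

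The main obstacle will be executing this combinatorial identification: unlike the purely one-sided case, left and right operators compose in opposite orders relative to the original index sequence, so a naive non-crossing matching does not apply. One must reindex via $s_\chi$ to linearise the two-sided action onto a single line --- so that non-crossingness in the $\prec_\chi$ sense corresponds to the usual non-crossingness after relabelling --- and then run a Speicher-style partitioned-moment argument in this reindexed setup, verifying carefully that both the bookkeeping of creation-type and annihilation-type contributions and the M\"obius inversion proceed in parallel with the classical free case. Once this is in place, the uniqueness principle delivers both directions of the equivalence simultaneously.
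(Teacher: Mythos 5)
The paper you are reading does not actually prove this theorem: it is recalled from \cite{2014arXiv1403.4907C} (stated as ``Theorem 4.3.1'' there) and used as a black box. So there is no ``paper's own proof'' to compare against here; I will instead assess your plan on its own terms and against the cited source.

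Your overall strategy is correct and is essentially the route taken in \cite{2014arXiv1403.4907C}. The uniqueness-principle framing is sound: if all mixed cumulants vanish, the moment-cumulant relation collapses to a sum over $\pi \leq \e$, each surviving $\kappa_{\chi|_B}$ is a universal polynomial in marginal moments via M\"obius inversion on $BNC(\chi|_B)$, and so the joint distribution is determined by the marginals; Voiculescu's free-product construction is, by definition, another such determination; and checking that the canonical model has vanishing mixed cumulants forces the two universal formulas to coincide, giving both implications at once. You have also identified the genuine technical crux: the two-sided action on the free product space is not order-preserving in the naive index order, and one must pass through $s_\chi$ to linearise it, after which a Speicher-type argument identifies the surviving contributions with $\pi \in BNC(\chi)$, $\pi \leq \e$. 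This is precisely the content of the lattice/diagram analysis in \cite{2014arXiv1403.4907C}, and it is indeed where most of the work lies.

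One small imprecision: the phrase ``inducting on the partition lattice, starting from the top partition'' does not quite describe what is needed. The observation that $1_n \nleq \e$ when $\e$ is non-constant is just the base of the conclusion, not an induction. What you actually need is an induction on $n$ (or on word length): assuming mixed cumulants vanish for all shorter words (in the canonical model), the moment-cumulant relation for $z_1\cdots z_n$ reads $\kappa_\chi(z_1,\ldots,z_n) = \varphi(z_1\cdots z_n) - \sum_{\pi < 1_n,\ \pi\leq\e} \kappa_\pi(z_1,\ldots,z_n)$, and the diagrammatic identity $\varphi(z_1\cdots z_n) = \sum_{\pi\leq\e}\kappa_\pi$ together with $1_n\nleq\e$ then gives $\kappa_\chi = 0$. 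This is a minor reorganisation of what you wrote, but worth getting right since the recursion is what makes the comparison of the two expansions into a proof rather than a heuristic. With that adjustment, the plan is correct and faithful to the original argument.
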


\section{Vaccine: a property equivalent to bi-freeness.}
\label{sec:vaccine}
\begin{defn}
	Let $\paren{(\A_\ell^{(\iota)}, \A_r^{(\iota)})}_{\iota \in \mathcal{I}}$ be a family of pairs of faces in a non-commutative probability space $(\A, \varphi)$.
	We say the family has the \emph{vanishing alternating centred $\chi$-interval Eigenschaft\footnote{We use the German word ``Eigenschaft'', meaning ``property'', both to commemorate the fact that this note was partially developed in Germany, and to make the acronym nice.}} (which we will abbreviate as \emph{\blob}) if whenever:
	\begin{itemize}
		\item $n \geq 1$,
		\item $\chi : \set{1, \ldots, n} \to \set{\ell, r}$,
		\item $\epsilon : \set{1, \ldots, n} \to \mathcal{I}$, and
		\item $z_1, \ldots, z_n \in \A$ are such that:
			\begin{itemize}
				\item $z_i \in \A_{\chi(i)}^{(\epsilon(i))}$; and
				\item whenever $\set{i_1 < \cdots < i_k}$ is a maximal $\e$-monochromatic $\chi$-interval, $\varphi(z_{i_1}\cdots z_{i_k}) = 0$,
			\end{itemize}
	\end{itemize}
	it follows that $\varphi(z_1\cdots z_n) = 0$.
\end{defn}

\begin{exam}
	\label{ex:vaccine}
Suppose $\chi$ is such that $\chi^{-1}(\ell) = \set{2,5,6,7,9}$, $\chi^{-1}(r) = \set{1, 3, 4, 8, 10}$, and $\e$ corresponds to the colouring below (i.e.,
$\e^{-1}\paren{\makeaball{0}} = \set{1,2,3,5,8,9,10}$ and
$\e^{-1}\paren{\makeaball{1}} = \set{4,6,7}$).
	\[\begin{tikzpicture}[baseline]
		\draw[thick, dashed] (-1,0.25) -- (-1, -4.75) -- (1,-4.75) -- (1,0.25);

		\def\colours{{0, 0, 0, 1, 0, 1, 1, 0, 0, 0}}
		\def\sidez{{1,-1,1,1,-1,-1,-1,1,-1,1}}
		\foreach \y in {0,...,9} {
			\pgfmathtruncatemacro{\nodename}{\y+1}
			\pgfmathtruncatemacro{\sd}{\sidez[\y]}
			\pgfmathparse{\palette[\colours[\y]]}
			\def\clr{\pgfmathresult}
			\node (ball\nodename) [draw, shade, circle, ball color=\clr, inner sep=0.07cm] at (\sd, -\y*0.5) {};
			\ifthenelse{\sd=1}{\node[right] at (\sd, -\y*0.5) {\nodename}}
					{\node[left] at (\sd, -\y*0.5) {\nodename}};
		}
		\draw[rline,thick] (ball2)++(-0.35,0) -- ++(-0.2,0) |- ($ (ball5) + (-0.35,0) $);
		\draw[rline,thick] (ball6)++(-0.35,0) -- ++(-0.2,0) |- ($ (ball7)+(-0.35,0) $);
		\draw[rline,thick] (ball9)++(-0.35,0) -- ++(-0.2,0) |- (0,-5) -| ($ (ball8)+(0.55,0) $) -- ++(-0.2,0);
		\draw[rline,thick] (ball4)++(0.35,0) -- ++(0.2,0);
		\draw[rline,thick] (ball3)++(0.35,0) -- ++(0.2,0) |- ($ (ball1)+(0.35,0) $);
	\end{tikzpicture}\]
	The maximal $\e$-monochromatic $\chi$-intervals are $\set{2, 5}, \set{6,7}, \set{8,9,10}, \set{4}$, and $\set{1,3}$.
	Vaccine would imply $\varphi(z_1\cdots z_{10}) = 0$ whenever $z_1, \ldots, z_{10}$ are chosen corresponding to $\chi$ and $\e$ with $$0 = \varphi(z_2z_5) = \varphi(z_6z_7) = \varphi(z_8z_9z_{10}) = \varphi(z_4) = \varphi(z_1z_3).$$
	The reason this condition becomes more complicated than in the free case amounts to the fact that we cannot replace $z_1z_3$ or $z_8z_9z_{10}$ with single elements of either the left or right faces; in the latter case, because neither face may contain an appropriate operator, and in the former because $z_2$ may not commute with $z_3$ or $z_1$ and so the two may not be moved next to each other.

\end{exam}

\subsection{The equivalence.}
\begin{lem}
	\label{lem:bifreeimpliesvaccine}
	Let $\paren{(\A_\ell^{(\iota)}, \A_r^{(\iota)})}_{\e \in \mathcal{I}}$ be a family of pairs of faces in a non-commutative probability space $(\A, \varphi)$.
	Then the family has \blob{} if the pairs of faces are bi-free.
\end{lem}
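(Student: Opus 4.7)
The plan is to invoke the main theorem of the preliminaries (bi-freeness is equivalent to the vanishing of mixed cumulants) and translate the problem into a question about a sum of cumulants. Under bi-freeness, for $z_1, \ldots, z_n$ as in the vaccine hypothesis, the moment-cumulant formula collapses to
\[
\varphi(z_1 \cdots z_n) = \sum_{\pi \in BNC(\chi),\, \pi \leq \e} \kappa_\pi(z_1, \ldots, z_n),
\]
since every cumulant over a block meeting more than one pair of faces vanishes. The task is then to show that this sum is zero using only the assumption that $\varphi(z_I) = 0$ for every maximal $\e$-monochromatic $\chi$-interval $I$.

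Let $\sigma \in BNC(\chi)$ be the partition whose blocks are exactly the maximal $\e$-monochromatic $\chi$-intervals. Then $\sigma \leq \e$, and the vaccine hypothesis gives $\varphi_\sigma = \prod_{I \in \sigma} \varphi(z_I) = 0$. Since each block $I$ of $\sigma$ is itself a $\chi$-interval, applying moment-cumulant inversion block-by-block inside the single pair of faces indexed by $\e(I)$ yields the identity $\varphi_\sigma = \sum_{\pi \leq \sigma,\, \pi \in BNC(\chi)} \kappa_\pi$. Consequently,
\[
\varphi(z_1 \cdots z_n) \;=\; \varphi_\sigma \;+\; \sum_{\substack{\pi \leq \e,\, \pi \in BNC(\chi)\\ \pi \not\leq \sigma}} \kappa_\pi,
\]
so the lemma reduces to showing that the residual sum on the right vanishes. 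Its summands correspond to bi-non-crossing partitions $\pi$ possessing some block $B$ that is $\e$-monochromatic yet is not contained in any single maximal $\e$-monochromatic $\chi$-interval---equivalently, the elements of $B$ are separated in $\prec_\chi$-order by elements of maximal $\e$-monochromatic $\chi$-intervals of other colour.

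The main obstacle is this residual cancellation, since for such a spanning block $B$ the moment $\varphi(z_B)$ is not among the quantities the vaccine hypothesis constrains. My plan is to expand $\kappa_{\chi|_B}(z_B)$ on each spanning block via moment-cumulant inversion inside the single pair of faces $\iota = \e(B)$, then exploit the bi-non-crossing constraint---which forces the remaining blocks of $\pi$ to sit inside the $\prec_\chi$-gaps of $B$---to pair up and cancel the resulting contributions across different partitions $\pi$. After regrouping, each surviving term should expose a factor of the form $\varphi(z_I)$ for some $I \in \sigma$, which vanishes by hypothesis; equivalently, an induction on $n$ may be used, peeling off a suitably chosen maximal $\e$-monochromatic $\chi$-interval to reduce to a case with fewer indices. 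The delicate combinatorial bookkeeping around spanning blocks is the chief technical difficulty, and has no direct analogue in the classical free case: there, the maximal monochromatic intervals are automatically contiguous in standard order, so the statement reduces immediately to the vanishing of alternating centred moments.
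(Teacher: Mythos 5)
Your opening move is the same as the paper's: use bi-freeness to write $\varphi(z_1\cdots z_n)=\sum_{\pi\in BNC(\chi),\,\pi\leq\e}\kappa_\pi$. After that, however, your route diverges and leaves a genuine gap. You split the sum into $\varphi_\sigma + \sum_{\pi\leq\e,\,\pi\not\leq\sigma}\kappa_\pi$ and, since $\varphi_\sigma=0$ under the hypothesis, reduce the lemma to the vanishing of the residual sum. That reduction is correct, but you do not actually prove the residual vanishes; ``expand the spanning block, pair up and cancel'' and ``each surviving term should expose a factor $\varphi(z_I)$'' are not arguments, and the alternative ``peel off a maximal interval and induct'' has a real obstruction: removing one maximal $\e$-monochromatic $\chi$-interval can merge two formerly separated intervals of the same colour into a single new maximal interval, for which the centering hypothesis gives no information. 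So neither sketch closes the gap.

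The missing idea --- and the one the paper's proof is organized around --- is that \emph{every} $\pi\in BNC(\chi)$ with $\pi\leq\e$ isolates at least one maximal $\e$-monochromatic $\chi$-interval (look at $\pi\vee\mathcal{J}$, use that a non-crossing partition has an interval block, and note that any $\chi$-interval block lying under $\e$ must be a maximal $\e$-monochromatic $\chi$-interval). Once you have this, you should not decompose the sum by comparing against $\sigma=\mathcal{J}$, but rather choose, for each $\pi$, a distinguished isolated interval $b(\pi)\in\mathcal{J}$ in a way that depends only on the part of $\pi$ outside $b(\pi)$. Grouping the cumulant sum according to the value of $b$ and then resumming freely over the restriction of $\pi$ to the isolated interval $B=b(\pi)$ produces the full moment $\varphi(z_B)$ as an explicit factor, which vanishes by hypothesis. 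This disposes of all partitions at once --- including the ones with spanning blocks --- without any delicate term-by-term cancellation, and avoids the induction. In short: your setup is right, but you need the isolated-interval observation to make the bookkeeping on spanning blocks actually go through.
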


\begin{proof}
	Let $n \geq 1$, $\chi : \set{1, \ldots, n} \to \set{\ell, r}$, and $\epsilon : \set{1, \ldots, n} \to \mathcal{I}$, and denote by $\mathcal{J}$ the set of maximal $\e$-monochromatic $\chi$-intervals in $\set{1, \ldots, n}$.
	Note that $\mathcal{J} \in BNC(\chi)$ may be thought of as a bi-non-crossing partition in its own right, which will sometimes be of use notationally.
	For $P \subset \set{1, \ldots, n}$, let $m(P)$ denote the minimum element of $BNC(\chi)$ containing $P$ as a block, so all blocks of $m(P)$ except $P$ are singletons.
	Let $b : \set{\pi \in BNC(\chi) : \pi \leq \e} \to \mathcal{J}$ be a function with the following properties:
	\begin{itemize}
		\item if $\pi \in BNC(\chi)$, $j \in b(\pi)$, and $j \sim_\pi k$, then $k \in b(\pi)$ (i.e., the interval $b(\pi)$ is isolated in $\pi$: $\pi \leq \set{b(\pi), b(\pi)^c}$); and
		\item if $\pi, \sigma \in BNC(\chi)$ satisfy $\pi\vee m(b(\pi)) = \sigma \vee m(b(\pi))$ then $b(\pi) = b(\sigma)$ (i.e., any partition obtained from $\pi$ by only modifying the part of $\pi$ in $b(\pi)$ is mapped to the same $\chi$-interval by $b$).
	\end{itemize}
	For example, one could take $b(\pi)$ to be the $\chi$-minimal element of $\mathcal{J}$ which is isolated in $\pi$.
	Any partition $\pi \in BNC(\chi)$ with $\pi \leq \e$ must leave one element of $\mathcal{J}$ isolated; indeed, if one takes $\pi \vee \mathcal{J} \leq \e$ (the element of $BNC(\chi)$ obtained from $\pi$ by joining all points lying in the same $\e$-monochromatic $\chi$-intervals), it must contain a $\chi$-interval (as any non-crossing partition, in particular $s_\chi^{-1}\cdot \pi\vee\mathcal{J}$, must contain an interval) and since $\mathcal{J} \leq \pi \vee \mathcal{J} \leq \e$, any interval it contains must be isolated and maximal $\e$-monochromatic.
	Then this same interval is isolated in $\pi$.

	Denote $S(B) = \set{\pi \in BNC(\chi) : B \in \pi}$ the set of bi-non-crossing partitions in which $B$ is a block.
	Note that if $\sigma \in S(B)$ and $\rho \in S(B^c)$, then $\sigma \wedge \rho$ is a partition with blocks under $B$ corresponding to $\rho$ and blocks outside of $B$ corresponding to $\sigma$.
	Further, any partition $\pi \in BNC(\chi)$ with $\pi \leq \set{B, B^c}$ may be expressed in this form: $\pi \vee m(B) \in S(B)$, $\pi \vee m(B^c) \in S(B^c)$, and $(\pi \vee m(B))\wedge(\pi\vee m(B^c)) = \pi$.

	Now, let $z_1, \ldots, z_n$ be as in the definition of vaccine.
	Using the moment-cumulant formula and the vanishing of mixed cumulants from bi-freeness, we have
	\begin{align*}
		\varphi(z_1\cdots z_n)
		&= \sum_{\substack{\pi \in BNC(\chi)\\ \pi \leq \e}} \kappa_\pi(z_1, \ldots, z_n)\\
		&= \sum_{B \in \mathcal{J}} \sum_{\pi \in b^{-1}(B)} \kappa_\pi(z_1, \ldots, z_n)\\
		&= \sum_{B \in \mathcal{J}} \sum_{\substack{\sigma \in S(B)\\ b(\sigma) = B}}\sum_{\rho \in S(B^c)} \kappa_{\sigma\wedge\rho}(z_1, \ldots, z_n)\\
		&= \sum_{B \in \mathcal{J}} \sum_{\substack{\sigma \in S(B)\\ b(\sigma) = B}}\paren{\sum_{\rho \in BNC(\chi|_B)}\kappa_\rho(z_1, \ldots, z_n)} \kappa_{\sigma\setminus \set{B}}(z_1, \ldots, z_n)\\
		&= \sum_{B \in \mathcal{J}} \sum_{\substack{\sigma \in S(B)\\ b(\sigma) = B}}\varphi(z_B) \kappa_{\sigma\setminus \set{B}}(z_1, \ldots, z_n)\\
		&= 0.
	\end{align*}
	Essentially, in the sum of cumulants representing $\varphi(z_1\cdots z_n)$, we have grouped terms together by isolated intervals, and used the fact that when we sum over the entire lattice of bi-non-crossing partitions over one of these intervals, we recover the moment corresponding to that interval, which is zero by assumption.
\end{proof}

\begin{lem}
	\label{lem:vaccineimpliesbifree}
	Let $\paren{(\A_\ell^{(\iota)}, \A_r^{(\iota)})}_{\iota \in \mathcal{I}}$ be a family of pairs of faces in a non-commutative probability space $(\A, \varphi)$.
	Then the pairs of faces are bi-free if the family has \blob{}.
\end{lem}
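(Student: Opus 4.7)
The plan is to show by induction on $n$ that all mixed cumulants of the family vanish, so that bi-freeness follows from the cumulant characterization of \cite{2014arXiv1403.4907C} quoted above. The base cases are handled directly: $n=1$ has no mixed cumulants, and for $n=2$ with $\e$ non-constant, applying vaccine to the centred elements $z_i - \varphi(z_i)\cdot 1$ yields $\varphi(z_1 z_2) = \varphi(z_1)\varphi(z_2)$, whence $\kappa_\chi(z_1,z_2) = 0$.

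For the inductive step, assume all mixed cumulants of size less than $n$ vanish, and consider $\kappa_\chi(z_1,\ldots,z_n)$ with $\e$ non-constant and $z_i\in\A_{\chi(i)}^{(\e(i))}$. By the moment-cumulant formula together with the inductive hypothesis, any $\kappa_\pi$ with $\pi\neq 1_\chi$ vanishes unless every block of $\pi$ is $\e$-monochromatic, which gives
\[
\kappa_\chi(z_1,\ldots,z_n) = \varphi(z_1\cdots z_n) - \sum_{\substack{\pi\in BNC(\chi)\\ \pi\leq\e}}\kappa_\pi(z_1,\ldots,z_n).
\]
The same rearrangement used in the proof of Lemma~\ref{lem:bifreeimpliesvaccine} reexpresses the right-hand sum as $\sum_{B\in\J}\varphi(z_B)\cdot H_B$, where each $H_B$ is a sum of products of pure cumulants in the variables $z_i$ for $i\notin B$. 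By the inductive hypothesis these are determined by marginal data alone, so it suffices to show the identity $\varphi(z_1\cdots z_n) = \sum_{B\in\J}\varphi(z_B)\cdot H_B$ whenever $\e$ is non-constant.

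To establish this identity, I would invoke vaccine via a multilinear substitution. For each $B\in\J$, write one of its elements $z_j = (z_j - \lambda_j \cdot 1) + \lambda_j \cdot 1$ and choose $\lambda_j$ so that after the substitution the new pure moment over $B$ vanishes. After doing this across all intervals, vaccine gives vanishing of the fully-centred term; the remaining terms from the multilinear expansion factor as products of $\varphi(z_B)$'s with shorter moments, which by the inductive hypothesis agree with pure-cumulant expressions and should match $\sum_B\varphi(z_B)H_B$ after rearrangement. The main obstacle is the degenerate case in which centring an interval of length $k \geq 2$ by a single scalar requires dividing by a reduced pure moment that itself vanishes; this can be resolved by iterating the substitution over several elements of the interval, or by sidestepping the issue entirely via direct comparison with a bi-free family sharing the same marginals, since that family also has vaccine by Lemma~\ref{lem:bifreeimpliesvaccine} and an induction on $n$ shows that any two states satisfying vaccine and agreeing on marginals must coincide on all joint moments.
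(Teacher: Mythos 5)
Your overall strategy — reduce to showing that vaccine uniquely determines all joint moments from the pure marginals, then compare against the bi-free product (which also has vaccine by Lemma~\ref{lem:bifreeimpliesvaccine}) — is sound and is essentially the same route the paper takes. The cumulant-based induction wrapper is a reasonable restatement, though the paper works directly with moments. However, there is a genuine gap in the centering step, and you yourself flag it without resolving it.

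Your substitution centres \emph{one} element $z_j$ of each maximal $\e$-monochromatic $\chi$-interval $B$ by a scalar $\lambda_j$ chosen so that $\varphi(z_B)$ becomes $0$ after the replacement. That is a \emph{linear} equation in $\lambda_j$, and it is unsolvable precisely when the coefficient — the pure moment of $B$ with $z_j$ deleted — vanishes. Neither of your proposed fixes actually closes this. ``Iterating over several elements'' produces a multilinear rather than linear system with no guaranteed solution. The ``sidestep'' via comparison with a bi-free family having the same marginals is circular: the claim that two vaccine-satisfying states with equal marginals agree on all joint moments is exactly the uniqueness statement you are trying to prove, and establishing it requires making the substitution work — you cannot outsource the degeneracy problem to it.

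The paper avoids the degeneracy entirely by a slightly different choice: for each interval $B = \set{a_1 < \cdots < a_j} \in \mathcal{J}$, it centres \emph{every} element of $B$ by the \emph{same} scalar $\lambda_B$, chosen as a root of
\[
\varphi\bigl((z_{a_1} - w)\cdots(z_{a_j} - w)\bigr),
\]
which is a monic polynomial of degree $j$ in $w$ and hence always has a complex root, with no non-degeneracy hypothesis required. With that single change your argument goes through: after the substitution every maximal $\e$-monochromatic $\chi$-interval has vanishing pure moment, vaccine kills the fully-centred term, and the remaining terms in the multilinear expansion involve fewer variables, allowing the recursion (or your induction) to close. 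Once uniqueness is established, the comparison with the bi-free product distribution finishes the proof as you indicate.
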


\begin{proof}
	We will show that vaccine uniquely specifies mixed moments in terms of pure ones.
	Let $n \geq 1$, $\chi : \set{1, \ldots, n} \to \set{\ell, r}$, $\e : \set{1, \ldots, n} \to \mathcal{I}$, and suppose $z_1, \ldots, z_n \in \A$ with $z_i \in \A_{\chi(i)}^{(\epsilon(i))}$.
	Denote by $\mathcal{J}$ the set of maximal $\e$-monochromatic $\chi$-intervals in $\set{1, \ldots, n}$.

	For each $I = \set{a_1 < \cdots < a_j} \in \mathcal{J}$, let $\lambda_I$ be a (complex) root of the polynomial $\varphi\paren{(z_{a_1} - w)\cdots (z_{a_j} - w)}$.
	Then if $f : \set{1, \ldots, n} \to \mathcal{J}$ is the unique map so that $i \in f(i)$ for every $i$, we have
	$$\varphi\paren{(z_1 - \lambda_{f(1)})\cdots(z_n-\lambda_{f(n)})} = 0,$$
	as the $\lambda$'s were chosen precisely to make the vaccine property apply.
	Expanding this equation gives us an expression for $\varphi(z_1\cdots z_n)$ in terms of mixed moments with at most $n-1$ terms; by recursively applying the same procedure we find an expression for $\varphi(z_1\cdots z_n)$ in terms of pure moments.

	Now, for $\iota \in \mathcal{I}$ let $\varphi^{(\iota)}$ be the restriction of $\varphi$ to $\ang{\A_\ell^{(\iota)}, \A_r^{(\iota)}}$, and $\mu = \substack{**\\ \iota \in \mathcal{I}}\varphi^{(\iota)}$ the bi-free product distribution, which by Lemma~\ref{lem:bifreeimpliesvaccine} also has vaccine.
	We then find that the same expressions for joint moments in terms of pure ones hold under $\mu$ as under $\varphi$, which is to say that $\paren{(\A_\ell^{(\iota)}, \A_r^{(\iota)})}_{\iota\in\mathcal{I}}$ are bi-free.
\end{proof}

\begin{thm}
	Let $\paren{(\A_\ell^{(\iota)}, \A_r^{(\iota)})}_{\iota \in \mathcal{I}}$ be a family of pairs of faces in a non-commutative probability space $(\A, \varphi)$.
	Then the family has \blob{} if and only if the pairs of faces are bi-free.
\end{thm}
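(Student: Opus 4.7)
The theorem is simply the combination of Lemma~\ref{lem:bifreeimpliesvaccine} and Lemma~\ref{lem:vaccineimpliesbifree}, so my plan is to cite these two results directly and note which direction each provides. Lemma~\ref{lem:bifreeimpliesvaccine} gives the forward implication (bi-free implies vaccine) by a reorganisation of the moment-cumulant sum: one groups bi-non-crossing partitions below $\e$ by a choice function $b$ picking out an isolated maximal $\e$-monochromatic $\chi$-interval, and then summing cumulants over that interval reconstitutes the moment $\varphi(z_B)$, which vanishes by hypothesis. Lemma~\ref{lem:vaccineimpliesbifree} gives the reverse: vaccine lets one solve for $\varphi(z_1\cdots z_n)$ recursively in terms of purely monochromatic moments, by choosing on each maximal $\e$-monochromatic $\chi$-interval $\{a_1<\cdots<a_j\}$ a root $\lambda_I$ of the polynomial $\varphi((z_{a_1}-w)\cdots(z_{a_j}-w))$ and expanding. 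Since the bi-free product distribution also satisfies vaccine (by the forward direction), the same reduction produces the same expression in both settings, forcing agreement of joint moments.

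Thus the plan is to write a short proof reading essentially: \emph{This is immediate from Lemmas~\ref{lem:bifreeimpliesvaccine} and~\ref{lem:vaccineimpliesbifree}.} There is nothing further to verify since both directions are already handled in the preceding lemmas, and the two directions together are precisely the statement of the theorem.

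The main conceptual obstacle has already been addressed inside Lemma~\ref{lem:bifreeimpliesvaccine}: the existence of the function $b$ assigning to each $\pi\le\e$ an isolated maximal $\e$-monochromatic $\chi$-interval, which relies on the fact that $s_\chi^{-1}\cdot(\pi\vee\mathcal{J})$ is a non-crossing partition and therefore must contain an interval, forcing $\mathcal{J}$ to contribute an isolated block. Given that, the theorem itself presents no additional difficulty, and the proof is a one-line citation.
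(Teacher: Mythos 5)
Your proposal is correct and matches the paper exactly: the theorem is stated immediately after Lemma~\ref{lem:bifreeimpliesvaccine} and Lemma~\ref{lem:vaccineimpliesbifree} with no further proof, precisely because it is their conjunction. Your summaries of the two lemmas' arguments are also accurate.
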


	We pause here to comment that although our proof of Lemma~\ref{lem:bifreeimpliesvaccine} relied on the bi-free cumulants and the behaviour of bi-non-crossing partitions, it is possible to establish the result without them; our motivation in using them was largely to simplify notation and avoid introducing the construction of a bi-free representation on a free product of Hilbert spaces from \cite{voiculescu2014free} or the structure of $LR$-diagrams from \cite{2014arXiv1403.4907C}.
	However, the same result may be obtained from the following proposition.

	\begin{prop}
		Suppose that $(\X_{\iota}, \xi_{\iota}, \mathring\X_{\iota})_{\iota\in\mathcal{I}}$ is a collection of vector spaces with specified state vectors (i.e., $\X_{\iota}$ is a vector space, $\xi_{\iota} \in \X_{\iota}$ a non-zero vector and $\mathring\X_{\iota}\subset\X_{\iota}$ a subspace so that $\X_{\iota} = \C\xi_{\iota} \oplus \mathring\X_{\iota}$) and let $(\X, \xi, \mathring\X) = \substack{*\\\iota \in \mathcal{I}}(\X_{\iota}, \xi_\iota, \mathring\X_\iota)$ be their free product.
		Let $\lambda_\iota$ and $\rho_\iota$ be the left and right representations of $\mathcal{L}(\X_\iota)$ in $\mathcal{L}(\X)$.

		Fix $n \in \N$ and take $\chi : \set{1, \ldots, n} \to \set{\ell, r}$ and $\epsilon : \set{1, \ldots, n} \to \mathcal{I}$.
		Choose $z_i \in \mathcal{L}(\X_{\epsilon(i)})$ so that whenever $\set{i_1 < \cdots < i_k}$ is a maximal $\epsilon$-monochromatic $\chi$-interval, $z_{i_1}\cdots z_{i_k}\xi_{\epsilon(i)} \in \mathring\X_{\epsilon(i)}$.
		Set $\mu_i = \lambda_{\epsilon(i)}$ if $\chi(i) = \ell$ and $\mu_i = \rho_{\epsilon(i)}$ if $\chi(i) = r$.
		Then if the maximal $\epsilon$-monochromatic $\chi$-intervals are denoted $J_1, \ldots, J_m$ in $\prec_\chi$-increasing order, and $\iota_i$ is taken to be the colour under $\epsilon$ of the $i$-th interval,
		$$
		\mu_1(z_1)\cdots\mu_n(z_n)\xi = z_{J_1}\xi_{\iota_1}\otimes z_{J_2}\xi_{\iota_2} \otimes \cdots \otimes z_{J_m}\xi_{\iota_m} \in \mathring\X.
		$$
	\end{prop}

	This proposition may be established by using a combinatorial argument similar to the one in the original proof of Lemma~\ref{lem:bifreeimpliesvaccine} and the calculus with $LR$-diagrams introduced in \cite{2014arXiv1403.4907C}: one groups diagrams in which not all spines reach the top of the diagram by a selected isolated interval, and then argues that summing over all diagrams with the same interval isolated yields a contribution of the moment of that interval, which is zero, meaning the only surviving diagrams are ones where every interval has a spine extending to the top of the diagram; a bit of algebra from there shows that the contribution of such diagrams together is the claimed vector.
	We provide a more direct argument here.

	\begin{proof}
		We will prove a related claim.
		Let us take $V_\iota$ and $W_\iota$ to be the left and right factoring isomorphisms used in defining the free product representations in \cite{voiculescu2014free}, so
		$$V_\iota : \X_\iota \otimes\paren{\C\xi \oplus\bigoplus_{a\geq1}\bigoplus_{\iota\neq i_1\neq\cdots\neq i_a}\mathring\X_{i_1}\otimes\cdots\mathring\X_{i_a} } \to \X \qquad\text{and}\qquad W_\iota : \paren{\C\xi \oplus\bigoplus_{a\geq1}\bigoplus_{i_1\neq\cdots\neq i_a \neq \iota}\mathring\X_{i_1}\otimes\cdots\mathring\X_{i_a} } \otimes \X_\iota \to \X.$$
		We will show that if $z_1, \ldots, z_n$ are as above but we only apply the assumption that $z_J\xi_{\iota} \in \mathring\X_\iota$ for $\chi$-intervals which are neither the first nor the last, then
		$$V_{\iota_1}^{-1}(\mu_1(z_1)\cdots\mu_n(z_n)\xi)
		= z_{J_1}\xi_{\iota_1}\otimes \paren{z_{J_2}\xi_{\iota_2}\otimes\cdots\otimes z_{J_{m-1}}\xi_{\iota_{m-1}}\otimes \mathring z_{J_m}\xi_{\iota_m} + z_{J_2}\xi_{\iota_2}\otimes\cdots\otimes z_{J_{m-1}}\xi_{\iota_{m-1}}\varphi_{\iota_m}(z_{J_m})},$$
		where an empty tensor product is interpreted as $\xi$ (in the case that $m = 2$), $\varphi_{\iota}$ is the state on $\mathcal{L}(\X_\iota)$ such that $T\xi_\iota - \varphi(T)\xi_\iota \in \mathring\X_\iota$, $\mathring z_{J} = z_J - \varphi_\iota(z_J)$, and the entire right tensor factor is replaced by $\xi$ if $\epsilon$ is constant.
		A similar result holds for $W_{\iota_m}^{-1}$.


		Notice that if there is only one operator, the claim holds immediately.
		Let us therefore proceed by induction on $n$, the number of operators.
		We will assume that $\chi(1) = r$ as the proof when $\chi(1) = \ell$ is similar; notice that this means $\iota_m = \epsilon(1)$.

		Suppose that $J$ is the $\chi$-maximal $\epsilon$-monochromatic $\chi$-interval of $\set{2, \ldots, n}$.
		There are two cases: first, suppose that $J$ has the colour $\epsilon(1)$.
		In that case, using our inductive hypothesis,
		\begin{align*}\rho_{\iota_m}(z_1)\mu_2(z_2)\cdots\mu_n(z_n)\xi
			&= W_{\iota_m}(1\otimes z_1)W_{\iota_m}^{-1}(\mu_2(z_2)\cdots\mu_n(z_n)\xi) \\
			&= W_{\iota_m}(1\otimes z_1)(\eta \otimes z_J) \\
			&= W_{\iota_m}(\eta \otimes z_1z_J),
		\end{align*}
		for the appropriate $\eta$; this is what we have aimed to show.
		On the other hand, suppose that $J$ has a colour other than $\epsilon(1)$.
		Then either $z_J\xi_{m-1} \in \mathring\X_{\iota_{m-1}}$ or $m = 2$ and $J$ is the $\chi$-first $\epsilon$-monochromatic $\chi$-interval.
		In either case, we find
		$$W_{\iota_m}^{-1}(\mu_2(z_2)\cdots\mu_n(z_n)\xi)
		= \paren{\mathring z_{J_1}\xi_{\iota_1}\otimes z_{J_2}\xi_{\iota_2}\otimes\cdots\otimes z_{J_{m-1}}\xi_{\iota_{m-1}} + \varphi_{\iota_1}(z_{J_1})z_{J_2}\xi_{\iota_2}\otimes\cdots\otimes z_{J_{m-1}}\xi_{\iota_{m-1}}} \otimes \xi_{\iota_m}.$$
		Hence acting by $(id\otimes z_1)$ produces the result we claimed.

		The proposition now follows by looking at the particular case when the first and last $\chi$-intervals also meet the assumptions.
	\end{proof}

	Yet another approach to the proof of Lemma~\ref{lem:bifreeimpliesvaccine} was pointed out to us by Ping Zhong of the University of Waterloo not long after this note appeared on the ar$\chi$iv.
	One can show that the left and right representations of operators on the free product vector space commute provided either that they come from different coloured algebras (this was remarked by Voiculescu in \cite{voiculescu2014free}) or that they are not in the same maximal $\epsilon$-monochromatic $\chi$-interval, there is at least one such interval $\chi$-between them, and all such intervals are centred.
	Using this one can assume that all maximal $\epsilon$-monochromatic $\chi$-intervals except for the one containing $n$ are singletons, which makes a direct computation tractable.

We will now make a digression to examine how this argument extends to other settings in bi-free probability.
Each of the following subsections is self-contained and not used in the rest of the paper, and the reader may safely skip to the beginning of Section~\ref{sec:liberation}.

\subsection{Conditional bi-freeness.}
Conditional bi-freeness was studied by Gu and Skoufranis in \cite{gu2016conditionally}, building off of conditional free independence which was introduced by Bo\.zejko, Leinert, and Speicher \cite{bozejko1996convolution}.
We will show that conditional bi-free independence also admits a characterization in terms of $\chi$-intervals.
First, though, we take the time to introduce some notation.

Suppose that $\pi \in BNC(\chi)$.
A block $B \in \pi$ is said to be \emph{inner} if there is another block $C \in \pi$ and $j, k \in C$ so that for every $i \in B$, $j \prec_\chi i \prec_\chi k$; a block which is not inner is said to be \emph{outer}.
With $\pi$ as in Example~\ref{ex:partition}, $\set{2,5,7}$ and $\set{1}$ are outer while $\set{3,4}$ and $\set{6,8}$ are inner.

Let $(\A, \varphi)$ be a non-commutative probability space, and $\theta$ a state on $\A$.
The conditional cumulants with respect to $(\theta, \varphi)$ are multilinear functionals $\mc{K}_\chi : \A^n \to \C$ defined by the requirement that for any $z_1, \ldots, z_n \in \A$,
$$\theta(z_1\cdots z_n) = \sum_{\pi\in BNC(\chi)}\paren{\prod_{\substack{V\in\pi\\V \text{ inner}}} \kappa_{\chi|_V}\paren{(z_1, \ldots, z_n)|_V}}\paren{\prod_{\substack{V\in\pi\\V\text{ outer}}}\mc{K}_{\chi|_V}\paren{(z_1, \ldots, z_n)|_V}}.$$
Here $\kappa$ represents the usual bi-free cumulants taken with respect to $\varphi$.
For $\pi \in BNC(\chi)$, we will denote by $\mc{K}_\pi(z_1, \ldots, z_n)$ the term in the above sum corresponding to $\pi$, a product of $\kappa_{\chi|_V}$ terms and $\mc{K}_{\chi|_V}$ terms.
We will say a family $\paren{\A_\ell^{(\iota)}, \A_r^{(\iota)}}_{\iota\in\I}$ is \emph{conditionally bi-free} in $(\A, \theta, \varphi)$ if it is bi-free with respect to $\varphi$ and all mixed conditional cumulants vanish; it was shown in \cite{gu2016conditionally} that this is equivalent to their definition in terms of free product representations, and moreover, that being conditionally bi-free uniquely specifies the mixed $\theta$-moments in terms of the pure $\theta$-moments and $\varphi$-moments.

\begin{thm}
	Let $(\A, \varphi)$ be a non-commutative probability space and $\theta : \A \to \C$ a state on $\A$.
	Suppose $\paren{\A_\ell^{(\iota)}, \A_r^{(\iota)}}_{\iota\in\I}$ is a family of pairs of faces in $\A$.
	Then the family is conditionally bi-free if and only if whenever:
	\begin{itemize}
		\item $n \geq 1$,
		\item $\chi : \set{1, \ldots, n} \to \set{\ell, r}$,
		\item $\epsilon : \set{1, \ldots, n} \to \mathcal{I}$,
		\item $\mc{J}$ is the set of maximal $\e$-monochromatic $\chi$-intervals, and
		\item $z_1, \ldots, z_n \in \A$ are such that:
			\begin{itemize}
				\item $z_i \in \A_{\chi(i)}^{(\epsilon(i))}$; and
				\item $\varphi(z_J) = 0$ for each $J \in \mc{J}$
			\end{itemize}
	\end{itemize}
	it follows that
	$$\varphi(z_1\cdots z_n) = 0 \qquad\text{and}\qquad \theta(a_1\cdots a_n) = \prod_{J \in \mc{J}} \theta(z_J).$$
\end{thm}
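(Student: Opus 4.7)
The plan is to adapt the proofs of Lemmas~\ref{lem:bifreeimpliesvaccine} and~\ref{lem:vaccineimpliesbifree} so as to track both $\varphi$ and $\theta$ simultaneously.

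For the forward direction, assume conditional bi-freeness. Since the family is in particular bi-free with respect to $\varphi$, Lemma~\ref{lem:bifreeimpliesvaccine} yields $\varphi(z_1\cdots z_n)=0$ at once, and the substantive task is to prove $\theta(z_1\cdots z_n)=\prod_{J\in\mc{J}}\theta(z_J)$. Using vanishing of mixed $\mc{K}$- and $\kappa$-cumulants, I would write
\[
\theta(z_1\cdots z_n) = \sum_{\pi\in BNC(\chi),\ \pi\leq\e}\mc{K}_\pi(z_1,\ldots,z_n)
\]
and split this sum according to whether $\pi\leq\mc{J}$. For contributions with $\pi\leq\mc{J}$, because the elements of $\mc{J}$ are pairwise-disjoint $\chi$-intervals a block of $\pi$ lying in one $J\in\mc{J}$ can neither bracket nor be bracketed by a block in another $J$; hence the inner/outer status of each block agrees with its status within its $J$, and the sum factors as $\prod_{J\in\mc{J}}\theta(z_J)$. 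For the remaining contributions I would follow the grouping strategy of Lemma~\ref{lem:bifreeimpliesvaccine}, assigning to each such $\pi$ an isolated $J=b(\pi)\in\mc{J}$ that is straddled by some block of $\pi|_{J^c}$, depending only on $\pi|_{J^c}$. Because $J$ is a $\chi$-interval, any straddling block of $\pi|_{J^c}$ brackets every block of $\pi|_J$, forcing them all to be inner in $\pi$; so $\mc{K}_\pi$ factors as a product of $\kappa$'s on $\pi|_J$ times the natural $\mc{K}/\kappa$-product on $\pi|_{J^c}$. Summing $\rho=\pi|_J$ over $BNC(\chi|_J)$ then collapses via the moment-cumulant relation to $\varphi(z_J)=0$, giving vanishing of each group.

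The main obstacle here is showing that such a choice of $b$ is available. To that end I would take a block $B$ of $\pi$ and $J\in\mc{J}$ with $B\cap J=\emptyset$ and $B$ straddling $J$, choosing the pair of minimal $\prec_\chi$-span (breaking ties in some fixed way). Existence of such a pair uses that consecutive intervals in $\mc{J}$ must carry distinct $\e$-colors, so any block spanning multiple $\mc{J}$-intervals straddles (and is disjoint from) some intermediate $J$. If some other block $C$ crossed $J,J^c$ then a standard bi-non-crossing argument (the only way $C$ avoids crossing $B$ is to lie inside the span $[b_1,b_2]_{\prec_\chi}$) shows $C$ itself would straddle some $\mc{J}$-interval with strictly smaller span, contradicting minimality. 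Since the chosen $B$ has $B\subset J^c$, the minimal-span rule only sees $\pi|_{J^c}$, making $b$ well-defined on each fibre of the grouping.

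For the reverse direction, the $\varphi$-clause is ordinary \blob{}, so Lemma~\ref{lem:vaccineimpliesbifree} gives bi-freeness with respect to $\varphi$. To upgrade this to conditional bi-freeness, I would repeat the centering trick of Lemma~\ref{lem:vaccineimpliesbifree}: for each $J=\set{a_1<\cdots<a_j}\in\mc{J}$ choose a complex root $\lambda_J$ of $w\mapsto\varphi((z_{a_1}-w)\cdots(z_{a_j}-w))$ and set $\tilde z_i=z_i-\lambda_{f(i)}$, where $f(i)$ is the unique $J$ containing $i$. The $\tilde z_i$ satisfy the hypothesis, so the $\theta$-clause gives $\theta(\tilde z_1\cdots\tilde z_n)=\prod_{J\in\mc{J}}\theta(\tilde z_J)$. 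Expanding this identity produces a relation expressing $\theta(z_1\cdots z_n)$ in terms of $\theta$-moments of fewer than $n$ elements together with $\varphi$-moments encoded by the $\lambda_J$; iteration reduces all mixed $\theta$-moments uniquely to pure $\theta$- and $\varphi$-moments. Since the conditional bi-free product of~\cite{gu2016conditionally} satisfies the same hypothesis (by the forward direction), both distributions obey the same recursion and therefore agree, giving conditional bi-freeness.
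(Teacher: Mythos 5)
Your proposal is correct and follows essentially the same two‐part strategy as the paper: the forward direction groups $BNC(\chi)$-partitions by assigning an isolated, bracketed $\mc{J}$-interval so that the inner $\kappa$-sum collapses to $\varphi(z_J)=0$, leaving only $\pi\leq\mc{J}$ which factorizes into $\prod_J\theta(z_J)$, while the reverse direction uses the centering-by-roots trick of Lemma~\ref{lem:vaccineimpliesbifree} to show the condition determines all mixed $\theta$- and $\varphi$-moments. Your minimal-span construction of $b$ differs superficially from the paper's (which picks the $\chi$-minimal inner isolated interval), and you spell out the inner/outer bookkeeping that the paper leaves implicit, but the route is the same.
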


\begin{proof}
	By the same argument as in the proof of Lemma~\ref{lem:vaccineimpliesbifree} it follows that the conditions assumed above suffice to uniquely specify all mixed $\varphi$- and $\theta$-moments in terms of pure $\varphi$- and $\theta$-moments; hence if we can show that conditionally bi-free families satisfy this condition the proof will be complete.
	The condition on $\varphi$ is precisely vaccine, so we need only show that our expression for mixed $\theta$-moments is correct.
	
	We take an approach similar to that of Lemma~\ref{lem:bifreeimpliesvaccine} for deducing the value of $\theta$.
	We claim that the only terms which contribute to the value of $\theta$ in the cumulant expansion are those corresponding to partitions $\pi < \mathcal{J}$, where once again $\mathcal{J}$ is the set of maximal $\e$-monochromatic $\chi$-intervals.
	Towards this end, let $b : \set{\pi \in BNC(\chi) : \pi \leq \e} \to \mc{J}$ be as in Lemma~\ref{lem:bifreeimpliesvaccine}, with the additional constraint that $b$ picks interior intervals whenever $\pi \nleq \mc{J}$.
	That is, $b$ should have the following properties:
	\begin{itemize}
		\item if $\pi \in BNC(\chi)$, $j \in b(\pi)$, and $j \sim_\pi k$, then $k \in b(\pi)$ (i.e., the interval $b(\pi)$ is isolated in $\pi$: $\pi \leq \set{b(\pi), b(\pi)^c}$);
		\item if $\pi, \sigma \in BNC(\chi)$ satisfy $\pi\vee m(b(\pi)) = \sigma \vee m(b(\pi))$ then $b(\pi) = b(\sigma)$ (i.e., any partition obtained from $\pi$ by only modifying the part of $\pi$ in $b(\pi)$ is mapped to the same $\chi$-interval by $b$); and
		\item if $\pi \in BNC(\chi)$ and $\pi \nleq \mc{J}$, then $b(\pi)$ is an inner block in $\pi \vee \mc{J}$.
	\end{itemize}
	Such functions exist: for example, one could take $b(\pi)$ to be the $\chi$-minimal element of $\mc{J}$ which is inner and isolated in $\pi$, if such exists, and the $\chi$-minimal element of $\mc{J}$ otherwise.
	Note that if $\pi \nleq \mc{J}$, $\pi$ must connect two intervals in $\mc{J}$ and so there must be an inner block in $\mc{J}\vee\pi$ between these two intervals.
	As before, let $S(B) = \set{\pi\in BNC(\chi) : B \in \pi}$, and set $S_i(B) = \set{\pi\in S(B) : B \text{ inner in } \pi\vee\mc{J}}$.
	We now compute much as in the proof of Lemma~\ref{lem:bifreeimpliesvaccine}.
	Supposing $z_1, \ldots, z_n$ meet the hypotheses of the lemma:
	\begin{align*}
		\theta(z_1\cdots z_n)
		&= \sum_{\substack{\pi\in BNC(\chi)\\ \pi\leq \e}} \mc{K}_\pi(z_1, \ldots, z_n)\\
		&= \sum_{B \in \mc{J}}\paren{\sum_{\substack{\pi\in b^{-1}(B)\\ B \text{ inner in } \pi\vee\mc{J}}}\mc{K}_\pi(z_1, \ldots, z_n) + \sum_{\substack{\pi\in b^{-1}(B)\\ B \text{ outer in } \pi\vee\mc{J}}}\mc{K}_\pi(z_1, \ldots, z_n)}\\
		&= \sum_{B\in\mc{J}} \paren{\sum_{\pi \in S_i(B) \cap b^{-1}(B)} \varphi(z_B)\mc{K}_{\pi\setminus\set{B}}(z_1, \ldots, z_n) + \sum_{\substack{\pi\in b^{-1}(B)\\ B \text{ outer in } \pi\vee\mc{J}}}\mc{K}_\pi(z_1, \ldots, z_n)}\\
		&= \sum_{\substack{\pi\in b^{-1}(B)\\ B \text{ outer in } \pi\vee\mc{J}}}\mc{K}_\pi(z_1, \ldots, z_n)\\
		&= \sum_{\substack{\pi \in BNC(\chi)\\\pi\leq\mc{J}}} \mc{K}_\pi(z_1, \ldots, z_n)\\
		&= \prod_{J\in\mc{J}}\sum_{\pi_J \in BNC(\chi|_J)} \mc{K}_{\pi_J}\paren{(z_1, \ldots, z_n)|_J}\\
		&= \prod_{J\in\mc{J}}\theta(z_J).
	\end{align*}
	Here in the last few lines we have noted that summing over all partitions sitting under $\mc{J}$ is the same as summing over partitions sitting under each interval individually, and then taking the product; this is valid since every term in $\mc{K}_\pi$ is a product of terms corresponding to blocks, and each block must be contained in a single interval in $\mc{J}$.
\end{proof}

\subsection{Bi-freeness with amalgamation.}
	Bi-free independence with amalgamation was introduced by Voiculescu in \cite{voiculescu2014free}, and further studied by Nelson, Skoufranis, and the author in \cite{Charlesworth:2015aa}.
	The setting is that of a $\B$-$\B$-non-commutative probability space, which is a triple $(\A, E, \varepsilon)$ where $\A$ is a unital $*$-algebra, $\varepsilon : \B\otimes \B^{op} \to \A$ is a unital homomorphism which is injective on $\B\otimes 1$ and $1\otimes \B^{op}$, and $E : \A\to \B$ is a linear map so that for $b_1, b_2 \in \B$ and $T \in \A$, we have
	$$E(\varepsilon(b_1\otimes b_2)T) = b_1 E(T) b_2 \qquad\text{and}\qquad E(T\varepsilon(b_1\otimes 1)) = E(T\varepsilon(1\otimes b_1)).$$
	Then bi-freeness with amalgamation can be defined either in terms of moments under $E$ matching those coming from an abstract free product with amalgamation over $\B$ of $\B$-$\B$-bimodules with specified $\B$-vector states, or by the vanishing of mixed bi-multiplicative $\B$-valued cumulants.
	
	In this setting the proof of Lemma~\ref{lem:bifreeimpliesvaccine} goes through with additional bookkeeping required. The combinatorial idea still holds, that one can discover the moment corresponding to an isolated interval in each term in the expression of $E(z_1\cdots z_n)$ in terms of cumulants, although one must take care to account for the nature of bi-multiplicative functions since the range of $E$ is now $\B$ rather than the scalars.

	The proof of Lemma~\ref{lem:vaccineimpliesbifree} requires a bit more care, however, essentially due to the fact that $\B$ is not necessarily algebraically closed and due to its non-commutativity $E$ does not necessarily map polynomials with coefficients in $\A$ to polynomials with coefficients in $\B$; essentially, the variables may become trapped between coefficients in such a way that they cannot be pulled out of the $E$.
	We do have the following Lemma, however, which will allow us to prove an analogue of Lemma~\ref{lem:vaccineimpliesbifree} in many settings.

	\begin{lem}
		Suppose $(\A, E, \varepsilon)$ is a $\B$-$\B$-non-commutative probability space, with $\B$ a Banach algebra.
		Then for every $\chi : \set{1, \ldots, n} \to \set{\ell, r}$ and $z_i \in \A_{\chi(i)}$ there exist $\hat b_i\in \B$ so that, with $b_i = \varepsilon(\hat b_i\otimes 1)$ if $\chi(i) = \ell$ and $b_i = \varepsilon(1\otimes \hat b_i)$ if $\chi(i) = r$, we have
		$$E((z_1-b_1)\cdots(z_n-b_n)) = 0.$$
	\end{lem}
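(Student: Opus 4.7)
The plan is to prove the following strengthening by induction on $n$: for any $c \in \B$, there exist $\hat b_1, \ldots, \hat b_n \in \B$ such that $E((z_1-b_1)\cdots(z_n-b_n)) = c$, from which the lemma follows on taking $c = 0$.

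For the base case $n = 1$, applying the first rule with $T = 1_\A$ gives $E(\varepsilon(\hat b \otimes 1)) = \hat b$ and $E(\varepsilon(1 \otimes \hat b)) = \hat b$ for all $\hat b \in \B$; hence $E(z_1 - b_1) = E(z_1) - \hat b_1$ in both cases of $\chi(1)$, and the choice $\hat b_1 = E(z_1) - c$ does the job.

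For the inductive step, I would apply the strengthened induction hypothesis to the tuple $(z_2, \ldots, z_n)$ with target $1_\B$, producing $\hat b_2, \ldots, \hat b_n$ such that $E(S) = 1_\B$, where $S := (z_2 - b_2)\cdots(z_n - b_n)$. Then, decomposing
$$E((z_1-b_1)S) = E(z_1 S) - E(b_1 S)$$
and using the first rule with $T = S$, one computes $E(b_1 S) = \hat b_1 \cdot E(S) \cdot 1 = \hat b_1$ if $\chi(1) = \ell$, and $E(b_1 S) = 1 \cdot E(S) \cdot \hat b_1 = \hat b_1$ if $\chi(1) = r$; in either case one sets $\hat b_1 = E(z_1 S) - c$ to obtain $E((z_1-b_1)S) = c$.

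The key insight, which I expect to be the main conceptual obstacle, is realising that one must strengthen the statement to allow an arbitrary target $c \in \B$ rather than only $c = 0$. A direct induction aimed only at making the full product vanish would founder: applying the induction hypothesis to $(z_2, \ldots, z_n)$ with target $0$ yields $E(S) = 0$, whereupon $\hat b_1$ drops out of $E((z_1-b_1)S) = E(z_1 S) - \hat b_1 \cdot E(S) = E(z_1 S)$, leaving nothing to tune. Strengthening lets us arrange $E(S) = 1_\B$, so that $\hat b_1$ appears with invertible (in fact unit) coefficient and the linear equation in $\hat b_1$ becomes trivially solvable. Notably, the Banach algebra hypothesis enters this argument only through the existence of the unit $1_\B \in \B$; the completeness/normed structure of $\B$ plays no role here and is presumably needed elsewhere in applying this lemma to recover the full $\B$-valued analogue of Lemma~\ref{lem:vaccineimpliesbifree}.
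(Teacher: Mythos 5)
Your proof is correct, and it takes a genuinely different --- and in fact stronger --- route than the paper's. The paper isolates the $\chi$-minimal index $j$, the unique position whose centering element can be commuted to the far left of the word (using the second defining property of $E$ when $\chi(j)=r$), sets every other $b_i$ equal to a single complex scalar $\lambda$, and must then solve an equation of the form $\hat b_j\cdot E(P)=E(Q)$, where $E(P)$ is a polynomial in $\lambda$ with invertible leading coefficient and hence invertible in $\B$ for $\lambda$ large; this is exactly where the Banach-algebra hypothesis enters. You instead peel off the leftmost factor of the word irrespective of its $\chi$-position --- legitimate because the identity $E(\varepsilon(b_1\otimes b_2)T)=b_1E(T)b_2$ applies to an $\varepsilon$-element at the left end of an arbitrary word, whether it encodes left or right multiplication --- and you strengthen the induction so that the coefficient of $\hat b_1$ in the resulting linear equation is $1_\B$ rather than a general element that would need to be inverted. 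Consequently your argument uses only the unit of $\B$ and the unitality of $E$ (the latter is part of the definition of a $\B$-$\B$-non-commutative probability space in \cite{Charlesworth:2015aa}, though the recap in this paper omits the word ``unital''; it is what gives $E(\varepsilon(\hat b\otimes 1))=\hat b$ in your base case, and the paper's own proof relies on it as well when computing the leading term of $E(P)$). You therefore remove the Banach-algebra hypothesis entirely --- a hypothesis the paper explicitly remarks it would like to weaken. The only cost is that all $n$ coefficients $\hat b_i$ become nontrivial and are determined recursively from right to left, rather than only the one at position $j$; for the intended application, recentering each maximal monochromatic $\chi$-interval, this is immaterial.
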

	
	\begin{proof}
		Let $j = \min_{\prec_\chi}\paren{\set{1,\ldots, n}}$.
		Notice that we can write
		\begin{align*}
			E((z_1-b_1)\cdots(z_n-b_n)) =
			&E( (z_1-b_1)\cdots (z_{j-1}-b_{j-1})z_j(z_{j+1}-b_{j+1})\cdots(z_n-b_n)) \\
			&\qquad - \hat b_jE( (z_1-b_1)\cdots(z_{j-1}-b_{j-1})(z_{j+1}-b_{j+1})\cdots(z_n-b_n)).
		\end{align*}
		Indeed, this is immediate if $\chi(j) = \ell$ (since $\chi(k) = r$ for $k < j$ in this case), while if $\chi(j) = r$ it must be that $j = n$, so replacing $b_n$ by $\varepsilon(\hat b_n\otimes 1)$ in the product does not affect the value of the expectation, and $\varepsilon(\hat b_n\otimes 1)$ can then be pulled past all of the right operators and out the left.
		Now, if we take $b_i = \lambda \in \C$ for all $i \neq j$, we find that $b_i$ commutes with every $z_k$, and
		$$E( (z_1-\lambda)\cdots(z_{j-1}-\lambda)(z_{j+1}-\lambda)\cdots(z_n-\lambda)) = (-\lambda)^n + \mathcal O\paren{\lambda^{n-1}},$$
		which is a polynomial in $\lambda$ with coefficients in $\B$ and leading term $(-\lambda)^n$.
		In particular, for $\lambda$ sufficiently large it is invertible in $\B$.
		Then once $\lambda$ is large enough, we may take
		\begin{align*}
			  \hat b_j
			  &= {E\paren{(z_1-\lambda)\cdots(z_{j-1}-\lambda)z_j(z_{j+1}-\lambda)\cdots(z_n-\lambda)}} \\
			  &\qquad \cdot{E\paren{(z_1-\lambda)\cdots(z_{j-1}-\lambda)(z_{j+1}-\lambda)\cdots(z_n-\lambda)}}^{-1},
		  \end{align*}
		  producing a solution to our equation.
	\end{proof}

	For the above lemma, we needed something weaker than $\B$ being a Banach algebra: we only require that monic polynomials with coefficients in $\B$ are invertible when evaluated at least one complex number.
	With this lemma in hand, we can reprove Lemma~\ref{lem:vaccineimpliesbifree} in the amalgamated setting; the only difference is that for each maximal $\chi$-interval $I$ we must choose a solution to an equation with $\abs{I}$ variables rather than only one.
	We therefore have the following theorem:

	\begin{thm}
		Suppose that $\B$ is a Banach algebra, and let $\fpf$ be a family of pairs of $\B$-faces in a $\B$-$\B$-non-commutative probability space $(\A, E, \varepsilon)$.
		Then the family has vaccine if and only if the pairs of $\B$-faces are bi-free with amalgamation over $\B$.
	\end{thm}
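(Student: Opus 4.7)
The plan is to prove each direction by adapting the corresponding scalar argument, using the preceding lemma to work around the fact that $\B$ is not in general algebraically closed.

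For the forward direction, bi-freeness with amalgamation over $\B$ is equivalent to the vanishing of mixed bi-multiplicative $\B$-valued cumulants, so the combinatorial skeleton of Lemma~\ref{lem:bifreeimpliesvaccine} transfers directly: one chooses an isolating function $b(\pi) \in \mc{J}$ onto the set of maximal $\e$-monochromatic $\chi$-intervals and groups the cumulant expansion of $E(z_1\cdots z_n)$ by the fibres of $b$. The only new point is that each inner sum over $BNC(\chi|_B)$ now collapses to the moment $E(z_B)$ via the $\B$-valued moment-cumulant relation, and this moment must be reinserted into the surrounding cumulant term via $\varepsilon(\cdot \otimes 1)$ or $\varepsilon(1 \otimes \cdot)$ according to the bi-multiplicative convention of \cite{Charlesworth:2015aa}. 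Because $B$ is isolated in $\pi$, this reinsertion is well-defined; the remaining factors vanish by hypothesis, yielding $E(z_1\cdots z_n) = 0$.

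For the converse direction, the aim is to show that vaccine uniquely specifies mixed $E$-moments in terms of pure $E$-moments; since by the forward direction the bi-free product distribution also has vaccine, the two distributions must coincide. Given $\chi$, $\e$, and $z_i \in \A_{\chi(i)}^{(\e(i))}$, enumerate the maximal $\e$-monochromatic $\chi$-intervals $J_1, \ldots, J_k$. On each $J_s$, apply the preceding lemma to $\chi|_{J_s}$ and $(z_i)_{i\in J_s}$ to obtain $\hat b_i \in \B$ so that $E$ of the corresponding centred product over $J_s$ vanishes; the Banach algebra hypothesis is used precisely here to guarantee the needed invertibility. Setting $b_i = \varepsilon(\hat b_i \otimes 1)$ or $\varepsilon(1\otimes \hat b_i)$ according to $\chi(i)$, each $z_i - b_i$ still lies in $\A_{\chi(i)}^{(\e(i))}$ and every centred maximal interval has vanishing $E$-moment, so vaccine produces $E((z_1-b_1)\cdots(z_n-b_n)) = 0$.

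Expanding this identity and using the bi-module property of $E$ to extract each $\hat b_i$ from under $E$ expresses $E(z_1\cdots z_n)$ as a finite sum of $E$-values of words of length strictly less than $n$, with $\B$-coefficients built from the $\hat b_i$'s. Induction on $n$ then reduces any mixed $E$-moment to pure $E$-moments. The main obstacle is precisely this extraction step: the $\hat b_i$'s need not commute with the $z_j$'s nor with each other, so one cannot simply factor them out. However, by processing the chosen coefficients in $\prec_\chi$-order exactly as in the proof of the preceding lemma — pulling out the $\prec_\chi$-minimal $\hat b_i$ past the operators on its side, then recursing — each expansion term is rewritten as a shorter mixed $E$-moment with $\B$-coefficients, so the induction closes.
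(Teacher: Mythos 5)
Your proof is correct and follows essentially the same route as the paper: the forward direction transfers the isolating-function argument of the scalar lemma with bi-multiplicative bookkeeping, and the converse applies the preceding lemma on each maximal $\e$-monochromatic $\chi$-interval (the ``$|I|$ variables rather than one'' step), then invokes vaccine and recurses on the number of live $z$'s. One small quibble with your final paragraph: you do not really need to \emph{extract} the $\hat b_i$'s from under $E$ to close the induction --- since $\varepsilon(\B\otimes 1)$ and $\varepsilon(1\otimes\B^{op})$ lie in every left (resp.\ right) $\B$-face, each $b_i$ can simply be commuted past operators of the opposite side and absorbed into an adjacent genuine $z_j$, so each expansion term has strictly fewer genuine $z$'s; the $\prec_\chi$-order extraction is what proves the lemma itself, not what drives the recursion.
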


\section{A liberation condition for bi-freeness.}
\label{sec:liberation}
Our aim in this section is to define a multiplicative bi-free Brownian motion, as an analogue to the free unitary Brownian motion introduced by Biane \cite{biane1997free}.
Many related results in the free case were obtained in the context of a tracial von Neumann algebra, allowing the arguments to be simplified; unfortunately that luxury is not available to us in the context of bi-free probability as we are not aware of an appropriate analogue of traciality.
As the following example demonstrates, simply asking that the state on the non-commutative probability space be tracial is too restrictive.
\begin{exam}
	Suppose $\paren{\A_\ell^{(\iota)}, \A_r^{(\iota)}}_{\iota\in\set{\makeaball{0},\makeaball{1}}}$ are bi-free pairs of faces in a non-commutative probability space $(\A, \varphi)$.
	Then we have for $x \in \A_\ell^{(\makeaball{0})}$, $w \in \A_r^{(\makeaball{0})}$, $y \in \A_\ell^{(\makeaball{1})}$, and $z \in \A_r^{(\makeaball{1})}$ that
	\[
	\begin{tikzpicture}[baseline]
		\def\colours{{0, 0, 1, 1, 0}}
		\def\sidez{{1, -1,-1,1,1}}
		\def\opnames{{"$w$", "$x$", "$y$", "$z$", "$w$"}}

		\begin{scope}[shift={(-\textwidth*0.25,0)}]
			\path (0,0)--(-\textwidth*0.25,0);
		\draw[thick, dashed] (-1,0.25) -- (-1, -1.75)
		-- node [below, scale=0.7]{$\varphi(xyzw) = \varphi(xw)\varphi(y)\varphi(z) + \varphi(x)\varphi(w)\varphi(yz) - \varphi(x)\varphi(w)\varphi(y)\varphi(z)$}
		(1,-1.75) -- (1,0.25);

		\foreach \y in {0,...,3} {
			\pgfmathparse{\opnames[1+\y]}
			\edef\nodename{\pgfmathresult}
			\pgfmathtruncatemacro{\sd}{\sidez[1+\y]}
			\pgfmathparse{\palette[\colours[1+\y]]}
			\def\clr{\pgfmathresult}
			\node (ball\nodename) [draw, shade, circle, ball color=\clr, inner sep=0.07cm] at (\sd, -\y*0.5) {};
			\ifthenelse{\sd=1}{\node[right] at (\sd, -\y*0.5) {\nodename}}
					{\node[left] at (\sd, -\y*0.5) {\nodename}};
		}
		\end{scope}
		\node [below, scale=0.7] at (1.25,-1.75) {while};
		\begin{scope}[shift={(\textwidth*0.25,0)}]
			\path (0,0)--(\textwidth*0.25,0);
		\draw[thick, dashed] (-1,0.25) -- (-1, -1.75)
		-- node [below, scale=0.7]{$\varphi(wxyz) = \varphi(wx)\varphi(yz)$.}
		(1,-1.75) -- (1,0.25);

		\foreach \y in {0,...,3} {
			\pgfmathparse{\opnames[\y]}
			\edef\nodename{\pgfmathresult}
			\pgfmathtruncatemacro{\sd}{\sidez[\y]}
			\pgfmathparse{\palette[\colours[\y]]}
			\def\clr{\pgfmathresult}
			\node (ball\nodename) [draw, shade, circle, ball color=\clr, inner sep=0.07cm] at (\sd, -\y*0.5) {};
			\ifthenelse{\sd=1}{\node[right] at (\sd, -\y*0.5) {\nodename}}
					{\node[left] at (\sd, -\y*0.5) {\nodename}};
		}
		\end{scope}
	\end{tikzpicture}
	\]
	Note that these two terms fail to be equal even when $(x, w)$, $(y, z)$ are a bi-free standard semicircular system with $\varphi(wx) = \varphi(yz) = 1$, as the left expression vanishes while the right equals $1$.
\end{exam}

\subsection{Free Brownian motion.}
We take some time to review the concept of free Brownian motion, which is the free analogue of the Gaussian process acting on a Hilbert space.
This is intended to be a brief treatment; a more complete description of free Brownian motion and free stochastic calculus may be found in \cite{voiculescu1992free}.

\begin{defn}
	A \emph{free Brownian motion} in a non-commutative probability space $(\A, \varphi)$ is a non-commutative stochastic process $(S(t))_{t\geq0}$ such that:
	\begin{itemize}
		\item the increments of $S(t)$ are free: for $0\leq t_1 < \cdots < t_k$, the collection $S(t_2)-S(t_1), \ldots, S(t_k)-S(t_{k-1})$ are freely independent; and
		\item the process is stationary, with semicircular increments: for $0 \leq s < t$, $S(t)-S(s)$ is semicircular with variance $t-s$.
	\end{itemize}
\end{defn}

Free Brownian motion can be modelled on a Fock space \cite{voiculescu1992free}. Indeed, suppose
$$\mathcal{F}\paren{L^2(\R_{\geq0})} := \C\Omega \oplus \bigoplus_{n\geq1} L^2(\R_{\geq0})^{\otimes n}.$$
Let $\xi_t = 1_{[0, t]}$, and define $S(t) = l(\xi_t) + l^*(\xi_t)$.
Then $(S(t))_t$ is a free Brownian motion.

Free unitary Brownian motion was initially introduced by Biane in \cite{biane1997free} as a multiplicative analogue of the (additive) free Brownian motion above.
Its definition makes reference to a certain family of measures $(\nu_t)_{t\geq0}$ supported on $\mathbb{T}$, introduced by Bercovici and Voiculescu in \cite{bercovici1992levy}.
In particular, $\nu_t$ has the property that for $t, s \geq 0$, $\nu_t\boxtimes\nu_s = \nu_{t+s}$.
We do not require the particular details of its introduction and so will eschew them.

\begin{defn}
	A \emph{free unitary Brownian motion} in a non-commutative probability space $(\A, \varphi)$ is a non-commutative stochastic process $(U(t))_{t\geq0}$ such that:
	\begin{itemize}
		\item the (left) multiplicative increments of $U(t)$ are free: for $0 \leq t_1 < \cdots < t_k$, the increments given by $U^*(t_1)U(t_2), U^*(t_2)U(t_3), \ldots, U^*(t_{k-1})U(t_k)$ are freely independent; and
		\item the process is stationary with increments prescribed by $\nu_\cdot$: the distribution of $U^*(t)U(s)$ depends only on $s-t$, and is in fact $\nu_{s-t}$.
	\end{itemize}
\end{defn}

It was shown in \cite{biane1997free} that if $S(t)$ is a Fock space realization of a free additive Brownian motion and $U(t)$ the solution to the free stochastic differential equation
$$dU(t) = iU(t)\,dS(t) - \frac12 U(t)\,dt$$
with $U(0) = 1$, then $U(t)$ is a free unitary Brownian motion.
Moreover, the moments of a free unitary Brownian motion were computed: for $n > 0$,
$$\varphi(U(t)^n) = \sum_{k=0}^{n-1}(-1)^k\frac{t^k}{k!}n^{k-1}{n \choose k+1}e^{-nt/2}.$$
A consequence is that free unitary Brownian motion converges in distribution to a Haar unitary, i.e., a unitary $u_\infty$ with $\varphi(u_\infty^{k}) = \delta_{k=0}$ for $k \in \Z$.
Another important results from \cite{biane1997free} is the following bound: for some $K > 0$ and any $t > 0$,
$$\norm{U(t)-e^{-t/2}} \leq K\sqrt{t}.$$

Haar unitaries are important within free probability because conjugating by them can create freeness: if $\A_1, \ldots, \A_n \subset \A$ are free from the Haar unitary $u \in \A$, then $\A_1, u\A_2u, \ldots, u^{n-1}\A_nu^{n-1}$ are free.
In \cite{Charlesworth:2015aa}, the author together with Nelson and Skoufranis identified a bi-free analogue: if $u_\ell, u_r \in \A$ are such that the $*$-distribution of the pair $(u_\ell, u_r)$ is the same as that of $(u, u^*)$ with $u$ a Haar unitary (so, for example, $\varphi(u_\ell^j u_r^k) = \delta_{j=k}$) and $\paren{(\A^{(\iota)}_\ell, \A^{(\iota)}_r)}_{\iota=1}^n$ are pairs of faces in $\A$ bi-free from $(u_\ell, u_r)$, then the faces $\paren{(u_\ell^{\iota-1}\A^{(\iota)}_\ell (u_\ell^*)^{\iota-1}, u_r^{\iota-1}\A^{(\iota)}_r(u_r^*)^{\iota-1})}_{\iota=1}^n$ are bi-free.
We take motivation from this fact to define a bi-free unitary Brownian motion: we want the process to tend to the distribution of a Haar pair of unitaries, so that conjugating by the process asymptotically creates bi-freeness and can therefore be seen as a sort of liberation.

\subsection{The free liberation derivation.}
Suppose that $A, B$ are algebraically free unital sub-algebras generating a tracial non-commutative probability space $(\A, \tau)$.
In \cite{Voiculescu1999101}, Voiculescu defined the derivation $\delta_{A:B} : \A \to \A\otimes\A$ to be a linear map satisfying the Leibniz rule such that $\delta_{A:B}(a) = a\otimes1-1\otimes a$ for $a \in A$ and $\delta_{A:B}(b) = 0$ for $b \in B$.
It was shown that $A$ and $B$ are freely independent if and only if $(\tau\otimes\tau)\circ\delta_{A:B} \equiv 0$.
Moreover, the derivation $\delta_{A:B}$ relates to how the joint distribution of $A$ and $B$ changes as $A$ is perturbed by unitary free Brownian motion.
\begin{prop}[\cite{Voiculescu1999101}*{Proposition 5.6}]
	Let $A, B$ be two unital $*$-subalgebras in $(\A, \tau)$ and let $\paren{U(t)}_{t\geq0}$ be a unitary free Brownian motion, which is freely independent of $A\vee B$.
	If $a_j \in A$ and $b_j \in B$ for $1\leq j \leq n$, then
	\begin{dmath*}
		\tau\paren{U(\e)a_1U(\e)^*b_1\cdots U(\e)a_nU(\e)^*b_n} 
		= 
		 \frac\e2(\tau\otimes\tau)\left(\delta_{A:B}\left(\sum_{k=1}^n a_kb_k\cdots a_nb_na_1b_1\cdots a_{k-1}b_{k-1}
		- \sum_{k=1}^n b_ka_{k+1}b_{k+1}\cdots a_nb_na_1b_1\cdots b_{k-1}a_k\right)\right)
 +\tau(a_1b_1\cdots a_nb_n) 
		+ \mathcal{O}(\e^2)
	\end{dmath*}
\end{prop}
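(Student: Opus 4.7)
The plan is to Taylor-expand $U(\e)$ in powers of the free Brownian motion increment $S(\e)$, substitute into the moment, and identify the surviving contributions at order $\e$. From the defining SDE $dU(t) = iU(t)\,dS(t) - \tfrac12 U(t)\,dt$ together with Biane's norm estimate $\|U(\e)-e^{-\e/2}\|\le K\sqrt{\e}$, a Duhamel iteration using the free It\^o rule $dS(t)^2 = dt$ yields an expansion
\[
	U(\e) = 1 + iS(\e) + X_\e + R_\e, \qquad \|R_\e\| = O(\e^{3/2}),
\]
and symmetrically for $U(\e)^*$, where $X_\e$ is a quadratic-in-$S(\e)$ correction of size $O(\e)$. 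Substituted into each conjugation $U(\e) a_k U(\e)^*$, the zeroth-order piece of the product is plainly $\tau(a_1 b_1 \cdots a_n b_n)$.

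For the order-$\e$ correction, I would exploit that $S(\e)$ is freely independent of $A \vee B$ and centred semicircular of variance $\e$: this kills any term containing an odd number of $S(\e)$-factors, while terms with exactly two such factors pick up a factor of $\e$ (since $\tau(S(\e)^2) = \e$), and terms with four or more are $O(\e^2)$ or smaller. The surviving two-$S(\e)$ contributions split into \emph{diagonal} ones, where both $S(\e)$-factors come from the same conjugation $U(\e) a_k U(\e)^*$, and \emph{off-diagonal} ones, where two distinct conjugations each supply one $S(\e)$. By the vanishing of alternating centred moments characterisation of freeness (equivalently, by a direct Fock-space calculation), each off-diagonal configuration $\tau(\ldots S(\e) w_1 S(\e) w_2 \ldots)$ with $w_i \in A\vee B$ reduces to $\e\cdot \tau(w_1)\tau(w_2)$; the diagonal contributions combine with the drift pieces in $X_\e$ and, by the order-$\e$ expansion of the unitarity identity $U(\e)U(\e)^* = 1$, cancel out entirely.

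The rest is algebraic reorganisation. Each off-diagonal pairing between positions $j$ and $k$, after traciality is used to cyclically rotate the alternating product, yields $\e\cdot \tau(\text{arc on one side of the cut})\,\tau(\text{complementary arc})$, matching one of the two summands produced by applying $(\tau\otimes\tau)\circ\delta_{A:B}$ to a cyclic rotation $a_k b_k \cdots a_{k-1} b_{k-1}$; the two signs in $\delta_{A:B}(a_j) = a_j \otimes 1 - 1 \otimes a_j$ correspond to the two relative orderings in which the contracted $S(\e)$-pair may straddle $a_j$. Grouping the resulting terms by the starting index of the cyclic rotation reproduces the two explicit sums in the statement, and a careful accounting of the factors $i \cdot i = -1$, the semicircular contraction, and the symmetry over pairings gives the overall coefficient $\tfrac\e2$.

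The main obstacle is combinatorial rather than analytic: verifying that after all the contractions and traciality-based rearrangements the surviving terms reassemble precisely into the two claimed sums, and that the diagonal plus drift contributions cancel as expected. The sign and coefficient bookkeeping of the double-commutator pairings -- rather than any individual estimate -- is the technical crux of the argument.
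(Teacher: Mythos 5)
The paper does not prove this proposition --- it is recalled from Voiculescu \cite{Voiculescu1999101} --- but the text does point to the key ingredient (the semicircular approximation, Proposition~\ref{prop:approxubm}), and your plan of replacing $U(\e)^{\pm1}$ by $(1-\e/2)\pm i\sqrt{\e}\,S$ and classifying the surviving two-$S$ contractions follows that route correctly in outline.

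There is, however, a substantive error. You claim that the \emph{diagonal} contributions (both $S$-factors coming from the same conjugation $U(\e)a_kU(\e)^*$), together with the drift pieces, ``cancel out entirely'' by unitarity. They do not. Expanding a single conjugation gives
\[
U(\e)a_kU(\e)^* = a_k + i\sqrt{\e}\,(Sa_k - a_kS) + \e\,(Sa_kS - a_k) + \mathcal{O}(\e^{3/2}),
\]
and after taking the zeroth order elsewhere, the order-$\e$ piece contributes
\[
\e\bigl[\tau(a_k)\,\tau(b_ka_{k+1}b_{k+1}\cdots a_{k-1}b_{k-1}) - \tau(a_1b_1\cdots a_nb_n)\bigr]
\]
to the full trace, which is generically nonzero. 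Far from cancelling, after summing over $k$ these diagonal-plus-drift terms are precisely the $j=k$ summands produced when $(\tau\otimes\tau)\circ\delta_{A:B}$ is applied to each cyclic rotation in the two sums of the statement; they must survive to reproduce the formula. The case $n=1$ makes the point concrete: there are no off-diagonal pairings at all, and the claimed identity reads
\[
\tau(U(\e)a_1U(\e)^*b_1) = \tau(a_1b_1) + \e\bigl[\tau(a_1)\tau(b_1)-\tau(a_1b_1)\bigr] + \mathcal{O}(\e^2),
\]
all of which comes from the diagonal and drift terms; your claimed cancellation would instead return $\tau(a_1b_1)+\mathcal{O}(\e^2)$. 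You need to carry these terms through the computation and match them to the corresponding pieces of the liberation derivation rather than discard them.
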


Important to the proof of the above proposition, and of use to us here also, is the following approximation result.
\begin{prop}[\cite{Voiculescu1999101}*{Proposition 1.4}]
	\label{prop:approxubm}
	Let $A$ be a $W^*$-subalgebra, $(U(t))_t$ a unitary free Brownian motion, and $S$ a $(0,1)$-semicircular element in $(M, \tau)$ so that $A$ and $(U(t))_t$ are $*$-free and $A$ and $S$ are also free. If $a_j \in A$ and $\alpha_j \in \set{1,-1}$, then we have
	$$\tau\paren{\prod_{1\leq j \leq n}^{\rightarrow} a_jU(t)^{\a_j}}
	= \tau\paren{\prod_{1\leq j \leq n}^{\rightarrow} a_j\paren{\paren{1-\frac{t}{2}}+i\a_j\sqrt{t}S}} + \mathcal{O}\paren{t^2},$$
	where the products place the terms in order from left to right.
\end{prop}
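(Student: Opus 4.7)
The plan is to Taylor-expand $U(t)$ about $t=0$ using its defining free SDE $dU(t) = iU(t)\,dS(t) - \tfrac12 U(t)\,dt$, substitute the expansion into the $\tau$-moment, and track the order of the resulting error.

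First I would iterate the SDE once, using the Biane bound $\norm{U(s)-e^{-s/2}}_\infty \le K\sqrt{s}$ recalled in the excerpt (hence $\norm{U(s)-1}_\infty = \mathcal{O}(\sqrt{s})$), to obtain an expansion
\[
U(t) = 1 + iS(t) - \tfrac{t}{2} + R(t),
\]
where $S(t)$ denotes the additive free Brownian motion at time $t$ and $R(t) = i\int_0^t(U(s)-1)\,dS(s) - \tfrac12\int_0^t(U(s)-1)\,ds$ satisfies $\norm{R(t)}_2 = \mathcal{O}(t)$ by free It\^o isometry. Because $A$ is $*$-free from $(U(s))_s$, it is in particular free from $S(t)$, and since the joint $*$-distribution of $(A,S(t))$ coincides with that of $(A,\sqrt{t}\,S)$ --- both being $A$ together with a $(0,t)$-semicircular in free position --- we may replace $S(t)$ by $\sqrt{t}\,S$ inside the moment without affecting its value. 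Applying the same iteration to $U(t)^{-1} = U(t)^*$ and unifying, this gives $U(t)^{\a_j} = (1-t/2) + i\a_j\sqrt{t}\,S + R_j(t)$ with $\norm{R_j(t)}_2 = \mathcal{O}(t)$.

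Next I would substitute these three-term expansions into $\prod_j a_j U(t)^{\a_j}$ and distribute; the sum of all terms avoiding the $R_j$'s reassembles exactly into $\prod_j a_j\paren{(1-t/2) + i\a_j\sqrt{t}\,S}$, producing the proposition's right-hand side. It then remains to bound the remaining terms, each containing at least one $R_j(t)$. A second iteration of the SDE splits $R(t) = R^{(1)}(t) + R^{(2)}(t)$, where $R^{(1)}(t)$ is an explicit iterated stochastic integral of $L^2$-order exactly $t$ (leading term $-\int_0^t S(s)\,dS(s)$) and $\norm{R^{(2)}(t)}_2 = \mathcal{O}(t^{3/2})$. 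The $R^{(2)}$-contributions are handled by the H\"older-type bound $\abs{\tau(x_1\cdots x_n)} \le \norm{x_k}_2\prod_{j\neq k}\norm{x_j}_\infty$, combined with the fact that each $V_k := (1-t/2) + i\a_k\sqrt{t}\,S$ has operator norm bounded uniformly for $t\in[0,1]$. The $R^{(1)}$-contributions I would analyze in the Fock-space model: $R^{(1)}(t)$ sits in the Brownian Fock subalgebra which is free from $A$ and satisfies $\tau(R^{(1)}(t)) = 0$, so any surviving contribution to $\tau$ of a product containing an $R^{(1)}_j$-factor must also involve additional $\sqrt{t}\,S$-factors from the other $V_k$'s in order to obey the free-Wick pairing constraint forced by $A$-Brownian freeness, yielding an extra factor of $\sqrt{t}\cdot\sqrt{t} = t$ and hence an overall $\mathcal{O}(t^2)$ bound.

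The main obstacle is this final sharpening from the naive $\mathcal{O}(t)$ (or $\mathcal{O}(t^{3/2})$) estimate down to $\mathcal{O}(t^2)$. It requires a careful enumeration of the free-Wick / cumulant pairings among the $R^{(1)}$-factors and the $\sqrt{t}\,S$-factors produced by the $V_k$'s, using freeness of $A$ from the Brownian Fock subspace to force an even pairing of all Brownian operators and thereby absorb two additional $\sqrt{t}$-factors into each surviving diagram. An alternative route would be to iterate the SDE once further and show by a direct symmetry/parity argument on the Brownian motion that all intermediate order-$t^{3/2}$ contributions vanish under $\tau$; in either presentation the combinatorial bookkeeping is the delicate step, since without it the estimate is off by a factor of $\sqrt{t}$.
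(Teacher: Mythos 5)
The paper does not actually prove Proposition~\ref{prop:approxubm}: it is cited from Voiculescu's \cite{Voiculescu1999101}, with only the remark that traciality was not needed in the original proof. The relevant information about the original approach appears in the proof of Lemma~\ref{lem:ubmest}, which mirrors it. There, one centres each $U(t)^{\a_j}$ at the \emph{exact} scalar mean $e^{-t/2}$ (note $\tau(U(t)^{\pm 1}) = e^{-t/2}$), so that after expanding the product, every centred factor kills the chaos-zero contribution identically. One then appeals to the operator norm bound $\norm{U(t)^{\a}-e^{-t/2}}\le K\sqrt{t}$ to dispose of terms with four or more centred factors, and to the explicit moment formula $\tau\bigl((U^{\a_{j_1}}-e^{-t/2})\cdots(U^{\a_{j_k}}-e^{-t/2})\bigr) = -\delta_{k=2}\a_{j_1}\a_{j_2}\,t + \mathcal{O}(t^2)$ to dispose of the $k=1,3$ terms; only $k=0,2$ survive, and those are matched term-by-term against the $(1-t/2)+i\a\sqrt{t}S$ model. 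No SDE expansion is performed.

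Your route, via a second iteration of the free SDE and a chaos decomposition, is genuinely different and in principle workable, but as written it has a real gap at exactly the point you flag as ``the delicate step''. Your decomposition $R_j = R_j^{(1)} + R_j^{(2)}$ with $\norm{R_j^{(2)}}_2 = \mathcal{O}(t^{3/2})$ does not by itself give $\mathcal{O}(t^2)$ via the H\"older bound $\abs{\tau(x_1\cdots x_n)}\le \norm{x_k}_2\prod_{j\ne k}\norm{x_j}_\infty$; that only gives $\mathcal{O}(t^{3/2})$. The orthogonality-of-chaoses/parity argument you invoke for the $R^{(1)}$-terms must therefore be applied \emph{also} to the $R^{(2)}$-terms, i.e.\ to the entire remainder, not just the explicit second-chaos piece. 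Moreover one must be careful that $R(t) = U(t)-(1-t/2)-iS(t)$ is \emph{not} purely second-and-higher chaos: its chaos-zero part is $e^{-t/2}-1+t/2=\mathcal{O}(t^2)$ (harmless), but its chaos-one part is $i\int_0^t(e^{-(t-s)/2}-1)\,dS(s)$, which is nonzero and of $L^2$-order $t^{3/2}$; terms pairing this chaos-one component of $R_j$ with an $S(t)$-factor from another slot do \emph{not} vanish by chaos orthogonality and must be estimated directly as $\mathcal{O}(t^2)$ (they are, but this needs to be said). So the strategy is salvageable, but the bookkeeping is heavier than the paper's/Voiculescu's because you are centring at $1-t/2$ rather than at the exact first moment $e^{-t/2}$, which is precisely what forces the residual low-order chaos components you then have to chase down. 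It would be cleaner to adopt the $e^{-t/2}$-centring outright and then present your chaos argument as a proof of the key moment identity for $k=1,3$ centred factors (the one the paper takes as given), which is what your parity discussion is really establishing.
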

Although the proposition was stated in terms of a tracial $W^*$-probability space, traciality was not needed in the proof.

\subsection{A bi-free analogue to the liberation derivation.}
For the remainder of this section, we will always be working in the context of a family of pairs of faces $\paren{(\A_\ell^{(\iota)}, \A_r^{(\iota)})}_{\iota\in\mathcal{I}}$ generating a non-commutative probability space $(\A, \varphi)$.
We will further denote by $\A_\ell$ and $\A_r$ the algebras generated by $\set{\A_\ell^{(\iota)} : \iota \in \mathcal{I}}$ and $\set{\A_r^{(\iota)} : \iota \in \mathcal{I}}$ respectively, and by $\A^{(\iota)}$ the algebra generated by $\A_\ell^{(\iota)}$ and $\A_r^{(\iota)}$.
Moreover, we assume that there are no algebraic relations between $\A^{(i)}$ and $\A^{(j)}$ other than $[\A^{(i)}_\ell, \A^{(j)}_r] = 0$ when $i \neq j$, and possibly $[\A^{(i)}_\ell, \A^{(i)}_r] = 0$.
In particular, we want to ensure that given a product $z_1\cdots z_n$ we can determine the $\chi$-order of the variables.

Suppose $\chi : \set{1, \ldots, n} \to \set{\ell, r}$ and let $1 \leq i, j \leq n$ with $i \preceq_\chi j$.
We denote $[i, j]_\chi := \set{k : i \preceq_\chi k \preceq_\chi j}$ the $\chi$-interval between $i$ and $j$, and define analogously $[i, j)_\chi$, $(i, j]_\chi$, and $(i, j)_\chi$.
Likewise we define $[i, \infty)_\chi := \set{k : 1 \leq k \leq n, i \preceq_\chi k}$ and analogously the other rays.

\begin{defn}
	Fix $\iota \in \mathcal{I}$.
	We define a map
	$$\taur_{\A^{(\iota)} : \bigvee_{j \in \mathcal{I}\setminus\set{\iota}} \A^{(j)}} : \A \to \A \otimes \A$$
	as follows.
	Given $z_i \in \A^{(\epsilon(i))}_{\chi(i)}$,
	$$
		\taur_{\A^{(\iota)} : \bigvee_{j \in \mathcal{I}\setminus\set{\iota}} \A^{(j)}}(z_1\cdots z_n)
		= \sum_{i \in \e^{-1}(\iota)} \sum_{\substack{j\in\e^{-1}(\iota)\\i \preceq_\chi j}}
		z_{[i,j]_\chi^c} \otimes z_{[i,j]_\chi}
		- z_{[i,j)_\chi^c}\otimes z_{[i,j)_\chi}
		- z_{(i,j]_\chi^c}\otimes z_{(i,j]_\chi}
		+ z_{(i,j)_\chi^c}\otimes z_{(i,j)_\chi}.
	$$
	We now extend this definition by linearity to all of $\A$.
	When context makes our intent clear, we will sometimes write $\taur_\iota$ for $\taur_{\A^{(\iota)} : \bigvee_{j \in \mathcal{I}\setminus\set{\iota}} \A^{(j)}}$.

	The subscript $A : B$ is meant to mimic that in the free situation, and the basic properties present there still hold: $B \subset \ker\taur_{A:B}$ and for $a \in A$, $\taur_{A:B}(a) = 1\otimes a-a\otimes 1$.
	However, $\taur_{A:B}$ is not a derivation, even when restricted to the left or right faces of $A$ and $B$.
\end{defn}
\begin{lem}
	\label{lem:taur}
	$\taur_\iota$ is well-defined.
	Moreover, the only terms which do not cancel in the sum defining $\taur_\iota$ are those in which no maximal $\e$-monochromatic $\chi$-interval is split across the tensor sign.
\end{lem}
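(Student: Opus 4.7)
The plan is to prove both claims by reorganising the sum defining $\taur_\iota(z_1\cdots z_n)$ according to which $\chi$-interval $A\subseteq\set{1,\ldots,n}$ appears on the right of the tensor sign. Fix a non-empty $\chi$-interval $A$ with $\chi$-endpoints $a=\min_{\prec_\chi}A$ and $b=\max_{\prec_\chi}A$, and let $a^-$ and $b^+$ denote the $\chi$-predecessor of $a$ and $\chi$-successor of $b$ when they exist. Among the four summands appearing inside each term of $\taur_\iota(z_1\cdots z_n)$, the tensor $z_{A^c}\otimes z_A$ arises only when the index pair $(i,j)$ is one of $(a,b),(a,b^+),(a^-,b),(a^-,b^+)$ (the four ways of being closed or open at each end), and then only if both coordinates lie in $\e^{-1}(\iota)$. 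Adopting the convention $[\e(x)=\iota]=0$ whenever $x\notin\set{1,\ldots,n}$, the resulting total coefficient factors as a discrete second difference,
$$c_A=\paren{[\e(a)=\iota]-[\e(a^-)=\iota]}\paren{[\e(b)=\iota]-[\e(b^+)=\iota]},$$
with analogous but easier identifications in the degenerate cases $A=\emptyset$, $A$ a singleton, and $A=\set{1,\ldots,n}$.

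From this product form, $c_A\neq 0$ forces $\e(a)\neq\e(a^-)$ and $\e(b)\neq\e(b^+)$ (reading absent neighbours as having colour $\neq\iota$). This places both $\chi$-endpoints of $A$ on boundaries between consecutive maximal $\e$-monochromatic $\chi$-intervals. Combined with the fact that $A$ is itself a $\chi$-interval, $A$ must then be a union of complete maximal $\e$-monochromatic $\chi$-intervals, and hence cannot split any such interval across the tensor sign---which is the second claim of the lemma.

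For well-definedness, the only ambiguity in writing the product $z_1\cdots z_n$ comes from the allowed commutations $[\A_\ell^{(i)},\A_r^{(j)}]=0$ for $i\neq j$ (and possibly $i=j$). I would argue that any swap of adjacent factors permitted by such a relation preserves the $\chi$-order on the underlying operators, and hence preserves the colour sequence read in $\chi$-order; consequently every coefficient $c_A$ is unchanged. Moreover, for any $\chi$-interval $A$ of operators, the algebra elements $z_A$ and $z_{A^c}$ are themselves invariant under the swap: the two commuted factors either both lie in $A$, both lie in $A^c$, or lie on opposite sides of the partition and so trivially leave both sub-products fixed. Thus the reorganised expression $\sum_A c_A\,z_{A^c}\otimes z_A$ depends only on invariant data, yielding a well-defined element of $\A\otimes\A$.

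The principal obstacle lies in the boundary book-keeping for the coefficient calculation: one must track which of the four candidate pairs $(a^\pm,b^\pm)$ fall outside $\set{1,\ldots,n}$ when $a$ is $\chi$-minimal, $b$ is $\chi$-maximal, or $A$ degenerates, and check that each dropped pair is consistent with the convention $[\e(x)=\iota]=0$ for absent $x$, so that the factorisation of $c_A$ holds uniformly in all cases.
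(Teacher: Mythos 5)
Your proof is correct but follows a genuinely different route from the paper. The paper proves the cancellation by a telescoping argument: for consecutive $\chi$-indices $i \prec_\chi i_+$ both in $\e^{-1}(\iota)$, open-left intervals anchored at $i$ coincide with closed-left intervals anchored at $i_+$ and carry opposite signs (and dually at right endpoints), so one never sees the total coefficient written out. You instead regroup the quadruple sum by the resulting $\chi$-interval $A$ and compute the net coefficient of $z_{A^c}\otimes z_A$ directly as a discrete second difference, $c_A = \paren{[\e(a)=\iota]-[\e(a^-)=\iota]}\paren{[\e(b)=\iota]-[\e(b^+)=\iota]}$, from which the ``no split'' criterion falls out immediately. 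This is more explicit and in fact sets up cleanly the bookkeeping later used in the proof that $\varphi\otimes\varphi\circ\taur_1\equiv 0$ characterises bi-freeness, where one needs exactly the coefficients of $z_1\cdots z_n\otimes 1$ and $1\otimes z_1\cdots z_n$.

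Two points should be tightened. First, the case $A=\emptyset$ is not ``analogous'': there is no $a$ or $b$, the four interval types degenerate differently (with an extra $(i,i^+)_\chi=\emptyset$ contribution for each consecutive $\iota$-coloured pair), and the coefficient works out to a single sum, $c_\emptyset=-\#\set{\iota\text{-coloured maximal monochromatic }\chi\text{-intervals}}$, which does not factor as a product of two differences. The lemma's conclusion still holds trivially since $\emptyset$ splits nothing, but the asymmetry between $A=\emptyset$ and $A=\set{1,\dots,n}$ is worth flagging rather than glossing. Second, your well-definedness argument addresses only the commutation ambiguity. The paper explicitly flags a second source of ambiguity, namely ``grouping or failing to group adjacent terms'': if $z_i, z_{i+1}$ lie in the same face of the same $\A^{(\iota)}$, their product may be rewritten as a single factor, changing $n$ and the indexing. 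Your coefficient analysis does handle this---once one knows the surviving $A$'s are exactly unions of complete maximal $\e$-monochromatic $\chi$-intervals, both $c_A$ and the sub-products $z_A, z_{A^c}$ depend only on the maximal-interval data, which regrouping preserves---but this step is what actually closes the well-definedness argument and should be stated.
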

\begin{proof}
	Our assumptions about the lack of algebraic relations in $\A$ mean that the only ambiguity in writing a product $z_1\cdots z_n$ comes from grouping or failing to group adjacent terms, and commuting left and right terms; the latter has no impact on $\taur_\iota$ because it does not change the $\chi$-ordering of the variables.
	Notice that if $i \prec_\chi i_+$ are consecutive under the $\chi$-ordering and both contribute to the sum, then all intervals with $i$ as an open left endpoint are intervals with $i_+$ as a closed left endpoint and have opposite sign in their contributions to the two terms; likewise, all intervals with $i$ as a closed right endpoint are intervals with $i_+$ as an open right endpoint and again cancel.
	Hence the value of $\taur_\iota$ does not change if a product is written differently, and the only terms which do not cancel are those with the tensor sign falling between two $\e$-monochromatic $\chi$-intervals (or one such interval and the edge of the product), exactly one of which is $\iota$-coloured.
\end{proof}

\begin{rem}
	In essence, $\taur_\iota$ acts by adding one term for each $\chi$-interval with endpoints either before or after terms coming from $\A^{(\iota)}$, consisting of the product of the terms not in that interval tensored with the product of the terms in the interval.
	The sign is chosen so that if the division comes before both chosen nodes or after both chosen nodes the term counts negatively, and otherwise counts positively.
	Notice that when $i = j$, the terms corresponding to $[i,i)_\chi$ and $(i,i)_\chi$ cancel and only one term contributing $-z_1\cdots z_n \otimes 1$ survives.

	The liberation gradient $\delta_{A^{(\iota)}:B}$ can be expressed in a similar manner:
	$$\delta_{A:B}(z_1\cdots z_n) = \sum_{i \in \e^{-1}(\iota)} -z_{(-\infty, i)}\otimes z_{(-\infty, i)^c} + z_{(-\infty, i]} \otimes z_{(-\infty, i]^c}.$$
\end{rem}

\begin{exam}
	Let $\chi, \e$ be as in Example~\ref{ex:vaccine}.
	Then $\taur_{\makeaball{1}}(z_1\cdots z_{10})$ is a sum of the following eight terms:
	\[\begin{tikzpicture}[baseline]
		\def\colours{{0, 0, 0, 1, 0, 1, 1, 0, 0, 0}}
		\def\sidez{{1,-1,1,1,-1,-1,-1,1,-1,1}}

		\begin{scope}[shift={(-\textwidth*0.375,0)},scale=1]
			\draw[thick, dashed] (-1,0.125) -- (-1, -2.375) --
			node[below,scale=2/3] {$-z_1\cdots z_{10}\otimes 1$}
			(1,-2.375) -- (1,0.125);
			\foreach \y in {0,...,9} {
				\pgfmathtruncatemacro{\nodename}{\y+1}
				\pgfmathtruncatemacro{\sd}{\sidez[\y]}
				\pgfmathparse{\palette[\colours[\y]]}
				\def\clr{\pgfmathresult}
				\node (ball\nodename) [draw, shade, circle, ball color=\clr, inner sep=0.07cm*2/3] at (\sd, -\y*1/4) {};
				\ifthenelse{\sd=1}{\node[scale=2/3,right] at (\sd, -\y*0.25) {\nodename}}
					{\node[scale=2/3,left] at (\sd, -\y*0.25) {\nodename}};
			}
			\draw [rline] (-1.1, -1.125) -- (-.8, -1.125);
		\end{scope}
		\begin{scope}[shift={(-\textwidth*0.125,0)},scale=1]
			\draw[thick, dashed] (-1,0.125) -- (-1, -2.375) --
			node[below,scale=2/3] {$z_1\cdots z_5 z_8z_9z_{10}\otimes z_6z_7$}
			(1,-2.375) -- (1,0.125);
			\foreach \y in {0,...,9} {
				\pgfmathtruncatemacro{\nodename}{\y+1}
				\pgfmathtruncatemacro{\sd}{\sidez[\y]}
				\pgfmathparse{\palette[\colours[\y]]}
				\def\clr{\pgfmathresult}
				\node (ball\nodename) [draw, shade, circle, ball color=\clr, inner sep=0.07cm*2/3] at (\sd, -\y*1/4) {};
				\ifthenelse{\sd=1}{\node[scale=2/3,right] at (\sd, -\y*0.25) {\nodename}}
					{\node[scale=2/3,left] at (\sd, -\y*0.25) {\nodename}};
			}
			\draw [rline] (-1.1, -1.125) -- (-1, -1.125) to [in=90,out=0] (-.8, -1.375) to [in=0,out=270] (-1, -1.625) -- (-1.1, -1.625);
		\end{scope}
		\begin{scope}[shift={(\textwidth*0.125,0)},scale=1]
			\draw[thick, dashed] (-1,0.125) -- (-1, -2.375) --
			node[below,scale=2/3] {$-z_1\cdots z_5\otimes z_6\cdots z_{10}$}
			(1,-2.375) -- (1,0.125);
			\foreach \y in {0,...,9} {
				\pgfmathtruncatemacro{\nodename}{\y+1}
				\pgfmathtruncatemacro{\sd}{\sidez[\y]}
				\pgfmathparse{\palette[\colours[\y]]}
				\def\clr{\pgfmathresult}
				\node (ball\nodename) [draw, shade, circle, ball color=\clr, inner sep=0.07cm*2/3] at (\sd, -\y*1/4) {};
				\ifthenelse{\sd=1}{\node[scale=2/3,right] at (\sd, -\y*0.25) {\nodename}}
					{\node[scale=2/3,left] at (\sd, -\y*0.25) {\nodename}};
			}
			\draw [rline] (-1.1, -1.125) -- ++(0.6,0) .. controls +(.5,0) and +(-.5, 0) .. (0.5, -0.875) -- ++(0.6,0);
		\end{scope}
		\begin{scope}[shift={(\textwidth*0.375,0)},scale=1]
			\draw[thick, dashed] (-1,0.125) -- (-1, -2.375) --
			node[below,scale=2/3] {$z_1z_2z_3z_5\otimes z_4z_6\cdots z_{10}$}
			(1,-2.375) -- (1,0.125);
			\foreach \y in {0,...,9} {
				\pgfmathtruncatemacro{\nodename}{\y+1}
				\pgfmathtruncatemacro{\sd}{\sidez[\y]}
				\pgfmathparse{\palette[\colours[\y]]}
				\def\clr{\pgfmathresult}
				\node (ball\nodename) [draw, shade, circle, ball color=\clr, inner sep=0.07cm*2/3] at (\sd, -\y*1/4) {};
				\ifthenelse{\sd=1}{\node[scale=2/3,right] at (\sd, -\y*0.25) {\nodename}}
					{\node[scale=2/3,left] at (\sd, -\y*0.25) {\nodename}};
			}
			\draw [rline] (-1.1, -1.125) -- ++(0.6,0) .. controls +(.5,0) and +(-.5, 0) .. (0.5, -0.625) -- ++(0.6,0);
		\end{scope}
		\begin{scope}[yshift=-3cm]
		\begin{scope}[shift={(-\textwidth*0.375,0)},scale=1]
			\draw[thick, dashed] (-1,0.125) -- (-1, -2.375) --
			node[below,scale=2/3] {$z_1\cdots z_7\otimes z_8z_9z_{10}$}
			(1,-2.375) -- (1,0.125);
			\foreach \y in {0,...,9} {
				\pgfmathtruncatemacro{\nodename}{\y+1}
				\pgfmathtruncatemacro{\sd}{\sidez[\y]}
				\pgfmathparse{\palette[\colours[\y]]}
				\def\clr{\pgfmathresult}
				\node (ball\nodename) [draw, shade, circle, ball color=\clr, inner sep=0.07cm*2/3] at (\sd, -\y*1/4) {};
				\ifthenelse{\sd=1}{\node[scale=2/3,right] at (\sd, -\y*0.25) {\nodename}}
					{\node[scale=2/3,left] at (\sd, -\y*0.25) {\nodename}};
			}
			\draw [rline] (-1.1, -1.625) -- ++(0.6,0) .. controls +(.5,0) and +(-.5, 0) .. (0.5, -0.875) -- ++(0.6,0);
		\end{scope}
		\begin{scope}[shift={(-\textwidth*0.125,0)},scale=1]
			\draw[thick, dashed] (-1,0.125) -- (-1, -2.375) --
			node[below,scale=2/3] {$-z_1z_2z_3z_5z_6z_7\otimes z_4z_8z_9z_{10}$}
			(1,-2.375) -- (1,0.125);
			\foreach \y in {0,...,9} {
				\pgfmathtruncatemacro{\nodename}{\y+1}
				\pgfmathtruncatemacro{\sd}{\sidez[\y]}
				\pgfmathparse{\palette[\colours[\y]]}
				\def\clr{\pgfmathresult}
				\node (ball\nodename) [draw, shade, circle, ball color=\clr, inner sep=0.07cm*2/3] at (\sd, -\y*1/4) {};
				\ifthenelse{\sd=1}{\node[scale=2/3,right] at (\sd, -\y*0.25) {\nodename}}
					{\node[scale=2/3,left] at (\sd, -\y*0.25) {\nodename}};
			}
			\draw [rline] (-1.1, -1.625) -- ++(0.6,0) .. controls +(.5,0) and +(-.5, 0) .. (0.5, -0.625) -- ++(0.6,0);
		\end{scope}
		\begin{scope}[shift={(\textwidth*0.125,0)},scale=1]
			\draw[thick, dashed] (-1,0.125) -- (-1, -2.375) --
			node[below,scale=2/3] {$z_1z_2z_3z_5\cdots z_{10}\otimes z_4$}
			(1,-2.375) -- (1,0.125);
			\foreach \y in {0,...,9} {
				\pgfmathtruncatemacro{\nodename}{\y+1}
				\pgfmathtruncatemacro{\sd}{\sidez[\y]}
				\pgfmathparse{\palette[\colours[\y]]}
				\def\clr{\pgfmathresult}
				\node (ball\nodename) [draw, shade, circle, ball color=\clr, inner sep=0.07cm*2/3] at (\sd, -\y*1/4) {};
				\ifthenelse{\sd=1}{\node[scale=2/3,right] at (\sd, -\y*0.25) {\nodename}}
					{\node[scale=2/3,left] at (\sd, -\y*0.25) {\nodename}};
			}
			\draw [rline] (1.1, -0.875) -- (1, -0.875) to [in=270,out=180] (.8, -0.75) to [in=180,out=90] (1, -0.625) -- (1.1, -0.625);
		\end{scope}
		\begin{scope}[shift={(\textwidth*0.375,0)},scale=1]
			\draw[thick, dashed] (-1,0.125) -- (-1, -2.375) --
			node[below,scale=2/3] {$-z_1\cdots z_{10}\otimes 1$}
			(1,-2.375) -- (1,0.125);
			\foreach \y in {0,...,9} {
				\pgfmathtruncatemacro{\nodename}{\y+1}
				\pgfmathtruncatemacro{\sd}{\sidez[\y]}
				\pgfmathparse{\palette[\colours[\y]]}
				\def\clr{\pgfmathresult}
				\node (ball\nodename) [draw, shade, circle, ball color=\clr, inner sep=0.07cm*2/3] at (\sd, -\y*1/4) {};
				\ifthenelse{\sd=1}{\node[scale=2/3,right] at (\sd, -\y*0.25) {\nodename}}
					{\node[scale=2/3,left] at (\sd, -\y*0.25) {\nodename}};
			}
			\draw [rline] (1.1, -0.875) -- (0.8, -0.875);
		\end{scope}
		\end{scope}
	\end{tikzpicture}\]
\end{exam}

\begin{thm}
	Let the notation be as above, and suppose $\mathcal{I} = \set{1,2}$.
	Then $(\A_\ell^{(1)}, \A_r^{(1)})$ and $(\A_\ell^{(2)}, \A_r^{(2)})$ are bi-free if and only if $(\varphi\otimes\varphi)\circ\taur_{1} \equiv 0$.
\end{thm}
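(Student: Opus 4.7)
The plan is to leverage the vaccine characterisation of bi-freeness from Lemmas~\ref{lem:bifreeimpliesvaccine} and~\ref{lem:vaccineimpliesbifree} together with the combinatorial structure of $\taur_1$ from Lemma~\ref{lem:taur}: after all cancellations, every surviving term of $\taur_1(z_1\cdots z_n)$ has the form $\pm z_A\otimes z_B$ where $[n]=A\sqcup B$ is a split into two $\chi$-intervals each of which is a union of complete maximal $\e$-monochromatic $\chi$-intervals.

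For the forward direction I would assume the family is bi-free (so \blob{} holds) and reduce to the centred case in which $\varphi(z_J)=0$ for every maximal $\e$-monochromatic $\chi$-interval $J$.  In that centred situation, \blob{} applied to each $z_A$ yields $\varphi(z_A)=0$ as soon as $A\neq\emptyset$ (its own maximal intervals are precisely those of the original product lying inside $A$, all with zero expectation), and the boundary contributions with $A=\emptyset$ or $A=[n]$ also vanish by \blob{} applied to the full product; hence every surviving term contributes zero.  The reduction itself is to be carried out by induction on the number of maximal intervals with nonzero expectation, peeling off one interval at a time via a substitution analogous to the one used in the proof of Lemma~\ref{lem:vaccineimpliesbifree}.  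This reduction is where I expect the main technical subtlety: $\taur_1$ is defined in terms of the individual $z_i$ rather than the interval products $z_J$, so the naive replacement $z_J\mapsto z_J-\varphi(z_J)\cdot 1$ has no direct sense inside the formula, and the centring must be installed at the level of the surviving terms.  A perhaps cleaner alternative is to expand each $\varphi(z_A)\varphi(z_B)$ via the moment--cumulant formula using vanishing of mixed cumulants, group the resulting double sum by bi-non-crossing partitions $\rho\leq\e$, and show combinatorially that for each such $\rho$ the signed total over surviving splits respecting $\rho$ vanishes --- a cancellation structurally parallel to the one driving Lemma~\ref{lem:taur} itself.

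For the backward direction, let $\mu$ denote the bi-free product of the individual distributions $\varphi^{(1)}$ and $\varphi^{(2)}$; by the forward direction, $\mu$ itself satisfies $(\mu\otimes\mu)\taur_1\equiv 0$, so it is enough to show that this identity uniquely determines the mixed moments of the family from pure ones.  I would proceed by induction on $n$: for $z_i\in\A^{(\e(i))}_{\chi(i)}$ with $\e$ non-constant, the relation $(\varphi\otimes\varphi)\taur_1(z_1\cdots z_n)=0$ is a linear identity whose remaining terms all involve moments of products strictly shorter than $n$.  A direct count of contributions to the full moment $\varphi(z_1\cdots z_n)$ --- coming from the $(i,i)$ diagonals, from the $(i,j)$ off-diagonals with $i,j$ consecutive in $\prec_\chi$, and from the extreme pair with $[i,j]_\chi=[n]$ when both $\prec_\chi$-endpoints of $[n]$ lie in $\e^{-1}(1)$ --- yields the coefficient $-|\mathcal{J}_1|+c$, where $\mathcal{J}_1$ is the collection of maximal $1$-coloured intervals and $c\in\{0,1\}$ records whether both $\prec_\chi$-endpoints of $[n]$ lie in $\e^{-1}(1)$.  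A short case analysis shows this coefficient vanishes precisely when $\e$ is constant on $[n]$, in which case $\varphi(z_1\cdots z_n)$ is already a pure moment; otherwise it is nonzero, and $\varphi(z_1\cdots z_n)$ is solved for in terms of strictly smaller moments, closing the induction.
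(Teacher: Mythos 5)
Your proposal is essentially sound, and the forward direction follows the same route as the paper. One remark on the ``technical subtlety'' you flag there: it is not actually subtle. The key point is that $\taur_1$ is well-defined as a linear map on $\A$ (Lemma~\ref{lem:taur}), so one is free to rewrite $z_1\cdots z_n$ in any convenient way before applying it; together with $\taur_1(\lambda)=0$ for scalars, the scalar shifts $z_i \mapsto z_i - \lambda_{f(i)}$ from the proof of Lemma~\ref{lem:vaccineimpliesbifree} (which preserve the monomial structure, unlike your worry about $z_J \mapsto z_J - \varphi(z_J)$) express $z_1\cdots z_n$ as a centred monomial of length $n$ plus monomials of strictly shorter length and scalars. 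A strong induction on the length $n$ (not on the number of non-centred intervals, which is not obviously decreasing on the lower-order terms) then finishes the reduction exactly as the paper intends with its remark that ``an arbitrary term may be written as a sum of such terms.'' The cumulant-expansion alternative you sketch is not needed.

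Your backward direction, however, is a genuinely different argument from the paper's. The paper proves the contrapositive: assuming bi-freeness (equivalently vaccine) fails, it chooses a vaccine counterexample $z_1,\ldots,z_n$ of minimal length; by minimality every surviving term $z_A\otimes z_B$ with $A,B$ both proper has $\varphi\otimes\varphi$ equal to $0$, and the remaining boundary terms sum to $(d-k)\varphi(z_1\cdots z_n)\neq 0$, so $(\varphi\otimes\varphi)\circ\taur_1\not\equiv 0$. You instead show directly that the identity $(\varphi\otimes\varphi)\circ\taur_1\equiv 0$ together with the pure moments uniquely determines all mixed moments, via strong induction on $n$ and the same coefficient $-|\mc J_1|+c = d-k$, and then invoke the bi-free product distribution $\mu$ as in Lemma~\ref{lem:vaccineimpliesbifree}. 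Both arguments hinge on exactly the same coefficient computation and the observation that $d=k$ forces $\e$ constant; yours is a ``universality'' argument that carries slightly more bookkeeping (you must justify that every surviving $\varphi(z_A)\varphi(z_B)$ with $A,B\neq\emptyset$ involves only strictly shorter moments, which is true), whereas the paper's minimal-counterexample version dispenses with tracking those terms entirely. Both are correct.
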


\begin{proof}
	Suppose first that bi-freeness holds.
	Note that for $\lambda \in \C$, $\taur_1(\lambda) = 0$, so it suffices to check the condition on products $z_1\cdots z_n$ with $z_i \in \A_{\chi(i)}^{(\e(i))}$ and each maximal $\e$-monochromatic $\chi$-interval centred, since an arbitrary term may be written as a sum of such terms.
	However, by Lemma~\ref{lem:taur} we know that each term in $\taur_1(z_1\cdots z_n)$ is a tensor product with zero or more centred $\chi$-intervals occurring on each side of the tensor. The vaccine condition from bi-freeness then tells us that $\varphi\otimes\varphi$ of such a term is $0$, and so $(\varphi\otimes\varphi)\circ\taur_1(z_1\cdots z_n) = 0$.

	Now, suppose that bi-freeness fails, and let $z_1, \ldots, z_n$ be an example of the failure of vaccine with a minimum number of terms.
	Then the only terms which possibly fail to vanish under $\varphi\otimes\varphi$ from $\taur_1$ are those of the form $z_1\cdots z_n\otimes 1$ or $1\otimes z_1\cdots z_n$ (the rest being ones to which vaccine should apply, which are of shorter length and so not counterexamples by minimality).
	The term $z_1\cdots z_n\otimes1$ occurs once per $1$-coloured $\chi$-interval with negative sign, while $1\otimes z_1\cdots z_n$ occurs once with positive sign if the $\chi$-first and $\chi$-last variables are both in $\A^{(1)}$, and not at all otherwise; let $k$ be the number of $1$-coloured $\chi$-intervals, and $d = 1$ if the $\chi$-first and $\chi$-last variables are in $\A^{(1)}$, with $d = 0$ otherwise.
	Hence $\varphi\otimes\varphi(\taur_1(z_1\cdots z_n)) = (d-k)\varphi(z_1\cdots z_n) \neq 0$ unless $k = d$; but if $k = d$ either there is one $1$-interval which is $\set{1, \ldots, n}$, or there are no $1$-intervals, and so $z_1\cdots z_n$ cannot actually be a counterexample of vaccine.
\end{proof}

\subsection{Bi-free unitary Brownian motion.}
We are now ready to introduce a bi-free unitary Brownian motion.

\begin{defn}
	A pair of free stochastic processes $(U_\ell(t), U_r(t))_{t\geq0}$ is a \emph{bi-free unitary Brownian motion} if:
	\begin{itemize}
		\item the multiplicative increments are bi-free: if $0\leq t_1 < \cdots < t_n$, then the family of pairs of faces $((U_\ell^*(t_\iota)U_\ell(t_{\iota+1}), U_r(t_{\iota+1})U_r^*(t_\iota))_{\iota=1}^{n-1}$ is bi-free;
		\item $(U_\ell(t))_{t\geq0}$ and $(U^*_r(t))_{t\geq0}$ are each free unitary Brownian motions, and for all $t > 0$ the $*$-distribution of the pair $(U_\ell(t), U_r(t))$ matches that of $(U_\ell(t), U_\ell^*(t))$; and
		\item the distribution is stationary: the moments of $(U_\ell^*(s)U_\ell(t), U_r(t)U_r^*(s))$ depend only on $t-s$.
	\end{itemize}
\end{defn}

We will show that once again a bi-free unitary Brownian motion may be realized from an additive Brownian motion.
\begin{lem}
	Suppose that $(S_\ell(t))_{t\geq0}$ is a free Brownian motion in a tracial von Neumann algebra $(M, \tau)$, and $J : L^2(M) \to L^2(M)$ is the Tomita operator defined on $M$ by $J(x) = x^*$ and extended continuously to $L^2(M)$.
	Let $S_r(t) = JS_\ell(t)J \in M'$.
	Then if $(U_\ell(t), U_r(t))$ are solutions to the stochastic differential equations
	$$dU_\ell(t) = iU_\ell(t)\,dS_\ell(t) - \frac12 U_\ell(t)\,dt \qquad\text{and}\qquad dU_r(t) = -iU_r(t)\,dS_r(t) - \frac12 U_r(t)\,dt,$$
	with initial conditions $U_\ell(0) = 1 = U_r(0)$, the pair $(U_\ell(t), U_r(t))$ is a bi-free unitary Brownian motion.
	Moreover, $(U_\ell(t), U_r(t))$ converges in distribution as $t \to \infty$ to a Haar pair of unitaries.
\end{lem}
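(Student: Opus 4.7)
The plan is to exploit the hypothesis $S_r(t) = JS_\ell(t)J$ to establish the identification $U_r(t) = JU_\ell(t)J$, which then lets every required property be pulled back from known facts about the (scalar) free unitary Brownian motion applied to $U_\ell$. Biane's theorem applied to the SDE for $U_\ell$ already gives that $(U_\ell(t))_{t\geq 0}$ is a free unitary Brownian motion, with freely independent left multiplicative increments, stationary distribution, and convergence in $*$-distribution to a Haar unitary $u$.

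For the identification, I would set $W(t) = JU_\ell(t)J$ and derive its free stochastic differential using that $J$ is antilinear with $J^2 = 1$ and $JS_\ell(t)J = S_r(t)$: the antilinearity of $J$ turns $iU_\ell\,dS_\ell$ into $-iW\,dS_r$, while $-\tfrac12 U_\ell\,dt$ becomes $-\tfrac12 W\,dt$. Hence $W$ solves exactly the SDE satisfied by $U_r$, so by uniqueness $U_r = JU_\ell J$. The $*$-distributional equality $(U_\ell(t), U_r(t)) \sim (U_\ell(t), U_\ell^*(t))$ then follows by expanding any monomial in the four variables $U_\ell(t)^{\pm 1}, U_r(t)^{\pm 1}$, using that $M$ and $M'$ commute to push all $U_r$-factors to the right, and then applying $(JU_\ell^k J)\hat 1 = \widehat{U_\ell^{-k}}$ together with traciality of $\tau$ to reduce the vector state to a tracial moment of $U_\ell$ alone. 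As a consequence, $(U_r(t))_{t\geq 0}$ is itself a free unitary Brownian motion, and stationarity of the increment pair $\bigl(U_\ell^*(s)U_\ell(t),\, U_r(t)U_r^*(s)\bigr) = \bigl(U_\ell^*(s)U_\ell(t),\, JU_\ell(t)U_\ell^*(s)J\bigr)$ transfers from stationarity of $U_\ell$ together with traciality.

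The step I expect to be the main obstacle is bi-freeness of the collection of multiplicative increment pairs $\bigl(U_\ell^*(t_\iota)U_\ell(t_{\iota+1}),\, U_r(t_{\iota+1})U_r^*(t_\iota)\bigr)_\iota$. The identification $U_r = JU_\ell J$ rephrases this as a general principle: if $B_1, \ldots, B_n$ are freely independent unital $*$-subalgebras of $M$ and $B_\iota'$ is the $*$-algebra generated by the ``reversed'' multiplicative increment $U_\ell(t_{\iota+1})U_\ell^*(t_\iota)$ (which has the same $*$-distribution as $B_\iota$ by traciality of $\tau$), then the pairs $(B_\iota, JB_\iota' J)_\iota$ are bi-free in the vector-state non-commutative probability space over $B(L^2(M))$. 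I would verify this by checking the \blob{} criterion of Lemma~\ref{lem:vaccineimpliesbifree}: given $z_1\cdots z_n$ satisfying the hypotheses, the commutativity of $M$ and $M'$ together with the identity $(JxJ)\hat 1 = \widehat{x^*}$ for $x \in M$ reduce $\langle z_1\cdots z_n \hat 1, \hat 1\rangle$ to a single tracial moment of a word in the $B_\iota$'s in which maximal consecutive same-index factors correspond exactly to maximal $\e$-monochromatic $\chi$-intervals; the centering hypothesis translates into centering of these blocks under $\tau$, and free independence of the $B_\iota$'s in $(M,\tau)$ forces the moment to vanish. Finally, convergence to a bi-Haar pair is immediate from the distributional match: as $t\to\infty$, Biane's convergence gives $(U_\ell(t), U_r(t)) \to (u, u^*)$ in $*$-distribution, and one computes the mixed moments as $\tau(u^{j-k}) = \delta_{j=k}$, precisely the moments of a bi-Haar pair.
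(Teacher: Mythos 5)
Your overall route is the same as the paper's: conjugate the SDE by $J$ to obtain $U_r(t)=JU_\ell(t)J$ (right multiplication by $U_\ell(t)^*$), and then transfer everything from the free unitary Brownian motion $U_\ell$. The SDE identification, the $*$-distributional match with $(U_\ell(t),U_\ell^*(t))$, and the convergence to a bi-Haar pair are fine. The gap is exactly in the step you flag as the main obstacle, and the ``general principle'' you formulate there is not correct as you state it. Under the identification, the right increment is $U_r(t_{\iota+1})U_r^*(t_\iota)=JV_\iota J$ with $V_\iota=U_\ell(t_{\iota+1})U_\ell^*(t_\iota)=U_\ell(t_\iota)\,W_\iota\,U_\ell(t_\iota)^*$, where $W_\iota=U_\ell^*(t_\iota)U_\ell(t_{\iota+1})$ is the left increment. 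Each $V_\iota$ involves $U_\ell(t_\iota)$, hence all earlier increments, so the algebras $\langle W_\iota, V_\iota\rangle$ are \emph{not} free over $\iota$: after you collapse $\langle z_1\cdots z_n\hat 1,\hat 1\rangle$ to a single trace, the $\chi$-ordered word has its left factors in $\langle W_\iota\rangle$ but its right factors in $\langle V_\iota\rangle$, the conjugators $U_\ell(t_\iota)$ do not cancel in general, and the concluding appeal to freeness of the $B_\iota$ is unavailable.

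In fact the principle fails. Take $t_1=0$, $u:=W_1=U_\ell(t_2)$, $w:=W_2$, so $V_2=uwu^*$; write $R_x$ for right multiplication by $x$ on $L^2(M)$. Choose $p\in\langle u\rangle$ with $\tau(p)=0$, $\tau(up)\neq 0$, and $f,q_2,q_4\in\langle w\rangle$ with $\tau(f)=\tau(q_4q_2)=0$ but $\tau(fq_4q_2)\neq 0$ (possible since $w$ is not a Haar unitary). Put $z_1=f$ and $z_3=p$ in the left faces of the second and first pairs, and $z_2=R_{uq_2u^*}$, $z_4=R_{uq_4u^*}$ in the right face of the second pair. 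Here $\chi=(\ell,r,\ell,r)$, $\e=(2,2,1,2)$, the $\chi$-order is $1\prec_\chi 3\prec_\chi 4\prec_\chi 2$, and the maximal $\e$-monochromatic $\chi$-intervals $\set{1},\set{3},\set{2,4}$ are all centred (note $\varphi(z_2z_4)=\tau(uq_4q_2u^*)=\tau(q_4q_2)=0$). Yet $\langle z_1z_2z_3z_4\hat 1,\hat 1\rangle=\tau(fp\,uq_4q_2u^*)=\tau(u^*f(up)(q_4q_2))=\tau(u^*)\tau(up)\tau(fq_4q_2)\neq 0$, so these pairs are not bi-free. What freeness of the left increments does give --- and what Corollary 10.2.3 of \cite{Charlesworth:2015aa} covers --- is bi-freeness of $(\langle W_\iota\rangle, J\langle W_\iota\rangle J)_\iota$, i.e.\ right increments $U_r^*(t_\iota)U_r(t_{\iota+1})=JW_\iota J$: the order must be reversed because $x\mapsto Jx^*J$ is an anti-homomorphism. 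Your vaccine computation is valid for that family, since then the left and right factors of colour $\iota$ collapse into the single algebra $\langle W_\iota\rangle$. You need either to verify the increment condition in that corrected form (flagging the discrepancy with the displayed definition) or to give a genuinely different argument; as written, your proof silently replaces $V_\iota$ by $W_\iota$.
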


\begin{proof}
	We find immediately that $U_\ell(t)$ is a unitary free Brownian motion.
	Note that integrating a stochastic process $\omega_t\sharp dX_t$ comes down to finding a limit in $L^2(\A)$ of approximations of the form $\sum \theta_{t_k}(x_{t_k}-x_{t_k-1})\phi_{t_k}$, where $\sum \theta_{t_k}\otimes \phi_{t_k}$ approximates $\omega_t$.
	It follows that $d(JX_t^*J) = J(dX_t)^*J$, and in particular, $JdS_r(t)J = dS_\ell(t)$.
	Conjugating the equation for $dU_r(t)$ above, we find
	$$
		d(JU_r(t)J)
		= i\paren{JU_r(t)J}JdS_r(t)J - \frac12\paren{JU_r(t)J}\,dt 
		= i\paren{JU_r(t)J}dS_\ell(t) - \frac12\paren{JU_r(t)J}\,dt.
	$$
	Thus $JU_r(t)J$ satisfies the same differential equation as $U_\ell$, whence the two are equal.
	We conclude that $U_r(t)$ corresponds to right multiplication in the standard representation on $L^2(M)$ by $U_\ell^*(t)$.
	The remaining properties of bi-free unitary Brownian motion now follow readily from the free properties possessed by $(U_\ell(t))_{t\geq0}$; see, e.g., Corollary 10.2.3 of \cite{Charlesworth:2015aa}.
\end{proof}

\begin{rem}
We find that conjugating by bi-free unitary Brownian motion leads to bi-freeness as $t\to\infty$, much like in the free case, and this allows to think of this as a sort of bi-free liberation process.
A strange consequence is the following: suppose that $X, Y \in L^\infty(\Omega, \mu) \subset \A$ are classical random variables, and $((U_\ell(t), U_r(t))_{t\geq 0}$ a bi-free unitary Brownian motion in $\A$, bi-free from $(X, Y)$.
Then $X$ commutes in distribution with $Y$, $U_r(t)$, and $U_r^*(t)$, so in particular, $X$ and $U_r(t)YU_r^*(t)$ become independent as $t\to\infty$ while always generating a commutative probability space.
One finds that
\begin{align*}
	\varphi(f(X)U_r(t)g(Y)U_r^*(t))
	&= \varphi(f(X)g(Y))\varphi(U_r(t))\varphi(U_r^*(t)) + \varphi(f(X))\varphi(g(Y))\paren{1-\varphi(U_r(t))\varphi(U_r^*(t))} \\
	&= \varphi(f(X)g(Y))e^{-t} + \varphi(f(X))\varphi(g(Y))\paren{1-e^{-t}}.
\end{align*}
\end{rem}

We will demonstrate a connection between liberation and the map $\taur$, but first we need a bi-free version of Proposition~\ref{prop:approxubm}.

\begin{lem}
	\label{lem:ubmest}
	Suppose $(\A_\ell, \A_r)$ is a pair of faces in $\A$ and $\paren{U_\ell(t), U_r(t)}$ is a bi-free unitary Brownian motion, bi-free from $(\A_\ell, \A_r)$.
	Suppose further that $(S_\ell, S_r)$ is a pair of semicircular variables with covariance matrix containing a $1$ in every entry, also bi-free from $(\A_\ell, \A_r)$.
	Let $\chi : \set{1, \ldots, n} \to \set{\ell, r}$, and for $1 \leq j \leq n$, take $a_j \in \A_{\chi_j}$ and $\a_j \in \set{1, 0, -1}$.
	Define $\psi : \set{1, \ldots, n} \to \set{1, -1}$ by $\psi(j) = \alpha_j$ if $\chi(j) = \ell$, and $\psi(j) = -\alpha_j$ otherwise.
	Then we have
	$$\varphi\paren{\prod_{1\leq j \leq n}^{\rightarrow} a_jU_{\chi(j)}(t)^{\a_j}}
	= \varphi\paren{\prod_{1\leq j \leq n}^{\rightarrow} a_j\paren{\paren{1-\abs{\a_j}\frac{t}{2}}+i\psi(j)\sqrt{t}S_{\chi(j)}}} + \mathcal{O}\paren{t^2}.$$
\end{lem}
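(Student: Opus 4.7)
The plan is to reduce to Proposition~\ref{prop:approxubm} by exploiting the distributional matching built into the definitions of bi-free unitary Brownian motion and of the semicircular pair with all-ones covariance.

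\emph{Distributional reduction.} By the definition of bi-free unitary Brownian motion, the $*$-distribution of $(U_\ell(t), U_r(t))$ matches that of $(U_\ell(t), U_\ell^*(t))$; likewise, the all-ones covariance of $(S_\ell, S_r)$ forces this pair to be distributionally $(S_\ell, S_\ell)$. Since $(\A_\ell, \A_r)$ is bi-free from both the Brownian motion and the semicircular pair, the joint moments appearing on either side of the claim are determined entirely by the marginals, and hence unchanged under the substitutions $U_r \mapsto U_\ell^*$ and $S_r \mapsto S_\ell$. Defining $\beta_j := \alpha_j$ when $\chi(j) = \ell$ and $\beta_j := -\alpha_j$ otherwise, so that $\abs{\beta_j} = \abs{\alpha_j}$ and $\beta_j = \psi(j)$, the claim reduces to
$$\varphi\paren{\prod_{1\leq j \leq n}^{\rightarrow} a_j U_\ell(t)^{\beta_j}} = \varphi\paren{\prod_{1\leq j \leq n}^{\rightarrow} a_j \paren{\paren{1 - \abs{\beta_j}\frac{t}{2}} + i\beta_j\sqrt{t}S_\ell}} + \mathcal{O}(t^2).$$

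\emph{Operator expansion.} I would next expand each $U_\ell(t)^{\beta_j}$ by iterating the defining SDE:
$$U_\ell(t)^{\beta_j} = \paren{1 - \abs{\beta_j}\tfrac{t}{2}} + i\beta_j S_\ell(t) + R_j(t),$$
where $\norm{R_j(t)} = \mathcal{O}(t^{3/2})$ by a standard iteration controlled by Biane's bound $\norm{U_\ell(t) - e^{-t/2}} \leq K\sqrt{t}$. Substituting into the product and expanding, the term in which every factor contributes the deterministic-plus-semicircular part $X_j := (1 - \abs{\beta_j}t/2) + i\beta_j S_\ell(t)$ produces exactly the right-hand side of the reduced claim, since $S_\ell(t)$ is distributionally $\sqrt{t}S_\ell$. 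Every other term contains at least one remainder $R_j(t)$.

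\emph{Error estimate.} To finish, I would show that the sum of all terms containing at least one remainder is $\mathcal{O}(t^2)$. A naive $L^2$-estimate gives only $\mathcal{O}(t^{3/2})$ per remainder, which is not sufficient. The sharpening to $\mathcal{O}(t^2)$ relies on the bi-free independence of the processes from $(\A_\ell, \A_r)$: the leading $t^{3/2}$-component of $R_j(t)$ is an iterated free stochastic integral whose expectation against any element of $\alg(\A_\ell, \A_r)$ vanishes, so that an extra factor of $\sqrt{t}$ is gained from each remainder. This parallels Voiculescu's argument in the proof of Proposition~\ref{prop:approxubm}, and transfers to our setting because that proof does not require traciality. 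The main obstacle is making the cancellation precise in the bi-free context, where the relevant factorization of joint moments must be extracted from the vaccine property rather than from ordinary free independence of a single algebra from $U_\ell(t)$.
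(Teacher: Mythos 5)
Your ``distributional reduction'' is not valid as stated, and this is a genuine gap. Bi-freeness determines mixed moments from the joint distributions of the two pairs of faces \emph{together with} the $\chi$-labelling of the factors. Replacing $U_r$ by $U_\ell^*$ changes the $\chi$-label of those positions from $r$ to $\ell$, hence changes the set of bi-non-crossing partitions over which the moment-cumulant formula sums; the resulting moment can differ even when the $*$-distribution of the pair $(U_\ell, U_r)$ coincides with that of $(U_\ell, U_\ell^*)$. Concretely, the pair $(U_\ell, U_\ell^*)$ does not define a pair of faces at all (left and right faces must commute, and $U_\ell$ does not commute with $U_\ell^*$), so there is no bi-free product in which the substituted expression lives. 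Moreover, even if the substitution were justified, the target statement would still involve $a_j$'s drawn from two faces $\A_\ell, \A_r$, not from a single algebra $A$, so it is not Proposition~\ref{prop:approxubm} either. The paper uses the $*$-distributional matching only where it is legitimate: to compute moments of \emph{pure} $U$-words $\varphi\bigl((U^{\a_{j_1}}_{\chi(j_1)}(t)-e^{-t/2})\cdots(U^{\a_{j_k}}_{\chi(j_k)}(t)-e^{-t/2})\bigr)$, which are unaffected by the $\chi$-labelling because no $a_j$'s are present.

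The second gap is that your error estimate is not actually carried out; you concede as much when you write that ``the main obstacle is making the cancellation precise.'' The paper sidesteps the SDE-iterated-integral approach entirely. It writes each $U_{\chi(j)}(t)^{\a_j} = \bigl(U_{\chi(j)}(t)^{\a_j} - e^{-t/2}\bigr) + e^{-t/2}$, expands the product, and uses Biane's uniform norm bound $\|U(t)^{\a}-e^{-t/2}\|\le K\sqrt t$ to discard all terms with four or more centred factors; the remaining odd terms (one or three centred factors) are then $\mathcal O(t^2)$ because the pure centred $U$-moment vanishes to that order, and the even terms (zero or two centred factors) are matched against the semicircular side by a direct bi-free cumulant expansion. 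This is cleaner than controlling SDE remainders, and it does not require anything beyond the cumulant machinery; the vaccine property plays no role in the paper's argument for this lemma. If you want to salvage your approach, you would need to (a) discard the global substitution and instead restrict the distributional matching to pure $U$-moments, and (b) replace the heuristic SDE-remainder argument with an honest estimate, most naturally along the lines the paper follows.
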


Essentially, this lemma tells us that the pair $(U_\ell(t), U_r(t))$ behaves in $*$-distribution to order $t$ the same as the pair $\paren{1-\frac\e2 + i\sqrt{t}S_\ell, 1-\frac\e2 - i\sqrt{t}S_r}$.

\begin{proof}
	We proceed along the same lines as in the proof of Proposition~\ref{prop:approxubm}.
	Let $I = \set{j : \a_j \neq 0}$, and write $m := \abs{I}$.
	Since the $*$-distribution of $(U_\ell(t), U_r(t))$ is the same as that of $(U_\ell(t), U_\ell^*(t))$, one can check that for any sequence $j_1 < \ldots < j_k$ of terms in $I$,
	$$\varphi\paren{(U_{\chi(j_1)}(t)^{\a_{j_1}}-e^{-t/2})\cdots(U_{\chi(j_k)}(t)^{\a_{j_k}}-e^{-t/2})}
	= -\delta_{k=2}\psi(j_1)\psi(j_2)t + \mathcal{O}(t^2).$$
	This follows from the fact that the same is true in the free case, which was used in the original proof of Proposition~\ref{prop:approxubm} (\emph{cf.} \cite{Voiculescu1999101}).
	
	Now for each $j \in I$, we rewrite $U_{\chi(j)}(t)^{\a_j}$ as $\paren{U_{\chi(j)}(t)^{\a_j} - e^{-t/2}} + e^{-t/2}$, and expand the product on the left hand side of the equation we are trying to establish.
	As we have the estimate $\norm{U_{\chi(j)}(t)^{\a_j} - e^{-t/2}} \leq K\sqrt{t}$, we find that only terms where at most three of these are chosen will contribute more than $\mathcal{O}(t^2)$.
But by the above argument, terms with one or three such differences are $\mathcal{O}(t^2)$ under $\varphi$;
	then only terms which contribute are those where precisely zero or two $\paren{U_{\chi(j)}(t)^{\a_j} - e^{-t/2}}$ terms are chosen.
	Hence,
	\begin{align*}
		\varphi\paren{\prod_{1\leq j \leq n}^{\rightarrow} a_jU_{\chi(j)}(t)^{\a_j}}
		&= \varphi(a_1\cdots a_n)e^{-nt/2} + \mathcal{O}(t^2) \\
		&\qquad- e^{-(n-2)t/2} \paren{\sum_{\substack{1 \preceq_\chi p \prec_\chi q \preceq_\chi n\\p, q \in I}} \varphi\paren{a_1\cdots a_p (U_{\chi(p)}^{\a_p}-e^{-t/2}) a_{p+1}\cdots a_q (U_{\chi(q)}^{\a_q} - e^{-t/2}) a_{q+1}\cdots a_n}} \\
		&= \varphi(a_1\cdots a_n)e^{-nt/2}
		- t e^{-(n-2)t/2} \paren{\sum_{\substack{1 \preceq_\chi p \prec_\chi q \preceq_\chi n\\p, q \in I}} \varphi(a_{(p, q]_\chi})\varphi(a_{(p, q]_\chi^c})\psi(p)\psi(q)}
		+ \mathcal{O}(t^2) \\
		&= \varphi(a_1\cdots a_n)\paren{1 - n\frac{t}2} - t\paren{\sum_{\substack{1 \preceq_\chi p \prec_\chi q \preceq_\chi n\\p, q \in I}} \varphi(a_{(p, q]_\chi})\varphi(a_{(p, q]_\chi^c})\psi(p)\psi(q)} + \mathcal{O}(t^2).
	\end{align*}
	Here the second equality may require some justification.
	One can verify that it is correct by considering the expansion in terms of cumulants; the terms corresponding to partitions with blocks of mixed colour or partitions that do not connect the $U$ terms both vanish, and we are left with all the bi-non crossing partitions which have the two joined. Summing over these, in turn, produces the product of the two moments claimed.

	Next we turn our attention to the right hand side of the equation.
	Notice that the pair $(S_\ell, S_r)$ has the same distribution as $(-S_\ell, -S_r)$ while both are bi-free from $(\A_\ell, \A_r)$, so replacing $\sqrt{t}$ by $-\sqrt{t}$ does not change the value and thus we are in fact working with a power series in $t$ rather than $\sqrt{t}$.
	Since the constant term is clearly correct, we need only establish that the $t$ term agrees.
	Contributions to the linear term come either from selecting a single $\frac{t}{2}$ in the product (together these contribute $-n\frac{t}{2}\varphi(a_1\cdots a_n)$) or from selecting a pair indices to include the semicircular terms from.
	But now
	$$\varphi(a_1\cdots a_{p}(i\psi(p)\sqrt{t})S_{\chi(p)}a_{p+1}\cdots a_{q} (i\psi(q)\sqrt{t})S_{\chi(q)} a_{q+1}\cdots a_n)
	= -t\psi(p)\psi(q)\varphi(a_{(p, q]_\chi})\varphi(a_{(p, q]^c_\chi}).
	$$
	Summing over the terms from which semi-circular elements may be selected, which is to say those with indices coming from $I$, we see the two sides of the claimed equation agree at order $t$, also. 
\end{proof}

\begin{thm}
	Suppose $(\A^{(\iota)}_\ell, \A^{(\iota)}_r)_{\iota \in \set{\makeaball{0}, \makeaball{1}}}$ are algebraically-free pairs of faces in a non-commutative probability space $(\A, \varphi)$, which is bi-free from the bi-free unitary Brownian motion $(U_\ell(t), U_r(t))$.
	Given $\chi : \set{1, \ldots, n} \to \set{\ell, r}$, $\e : \set{1, \ldots, n}\to\set{\makeaball{0}, \makeaball{1}}$, and $x_i \in \A_{\chi(i)}$, set
	$$z_i{(t)} = \left\{\begin{array}{l@{\emph{ if }}l} 
			x_i & \e(i) = \makeaball{0}\\
			U_{\chi(i)}(t) x_i U_{\chi(i)}^*(t) & \e(i) = \makeaball{1}.
	\end{array}\right.$$
	Then we have the following estimate:
	$$
	\varphi(z_1{(t)}\cdots z_n{(t)}) = \varphi(x_1\cdots x_n) + t\varphi\otimes\varphi\paren{\taur_{\makeaball{1}}(x_1\cdots x_n)} + \mathcal{O}(t^2).
	$$
\end{thm}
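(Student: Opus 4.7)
The plan is to apply Lemma~\ref{lem:ubmest} to substitute each $U_{\chi(i)}(t)^{\pm1}$ in the product by its linear-in-$\sqrt{t}$ approximation $(1-t/2)\pm i\sqrt{t}S_{\chi(i)}$, then expand modulo $O(t^2)$. To fit the lemma's scheme $\prod_j a_j U_{\chi(j)}(t)^{\alpha_j}$, I first rewrite each $z_i(t)$ with $\e(i)=\makeaball{1}$ as two consecutive $aU$-pairs $(1)U_{\chi(i)}(t)\cdot(x_i)U_{\chi(i)}^*(t)$; the lemma then yields $\varphi(z_1(t)\cdots z_n(t)) = \varphi(\tilde z_1(t)\cdots \tilde z_n(t)) + O(t^2)$, where $\tilde z_i = x_i$ when $\e(i)=\makeaball{0}$ and
$\tilde z_i = (1-t/2 + i\sqrt{t}\sigma_i^-)\, x_i\, (1-t/2 + i\sqrt{t}\sigma_i^+)$
otherwise, with $\sigma_i^\pm$ signed copies of $S_{\chi(i)}$ fixed by the lemma's conventions.

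Expanding in powers of $\sqrt{t}$: the constant term is $\varphi(x_1\cdots x_n)$, and the $O(\sqrt{t})$ terms each carry a single semicircular insertion among the $x$'s. By bi-freeness of $(S_\ell,S_r)$ from $(\A_\ell,\A_r)$ together with $\varphi(S_\ell) = \varphi(S_r) = 0$, in the bi-free cumulant expansion every partition contains either a singleton block $\{S\}$ with vanishing cumulant or a mixed block with vanishing cumulant, so these moments all vanish. The only work is therefore at order $t$, where the contributions split into two types: Type~I, from taking the $O(t)$ coefficient $-x_i - \sigma_i^- x_i\sigma_i^+$ of a single $\tilde z_i$; and Type~II, from multiplying the $O(\sqrt{t})$ coefficients $i(\sigma_i^- x_i + x_i\sigma_i^+)$ of two distinct $\tilde z_i, \tilde z_j$ with $i,j\in\e^{-1}(\makeaball{1})$.

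Each contribution reduces to a moment $\varphi(x_1\cdots [\text{two $S$-insertions}]\cdots x_n)$, and bi-freeness together with the covariance matrix of $(S_\ell,S_r)$ (every entry equal to $1$) collapses such a moment via the bi-free cumulant formula to $\varphi(x_{\text{in}})\varphi(x_{\text{out}})$, where ``in'' denotes the $x$'s whose $\chi$-position lies strictly between the two insertions and ``out'' its complement. The key combinatorial observation is that for each $i \in \e^{-1}(\makeaball{1})$, if $P_i^-$ and $P_i^+$ denote the $\chi$-predecessor and $\chi$-successor of $x_i$ in the extended product, then the semicircular insertion at $P_i^-$ carries the value $+S_{\chi(i)}$ and at $P_i^+$ the value $-S_{\chi(i)}$, uniformly in $\chi(i)$; for $\chi(i)=r$ the reversal of product-order by the $\chi$-order swaps which of $\sigma_i^-, \sigma_i^+$ sits at the $\chi$-predecessor, but this is compensated by the lemma's sign conventions so the signed pattern at $P_i^\pm$ is invariant. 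The Type~I contribution at index $i$ then simplifies to $\varphi(x_i)\varphi(x_{\{i\}^c}) - \varphi(x_1\cdots x_n)$, matching the $i=j$ diagonal term of $\taur_{\makeaball{1}}$; and the Type~II contribution from a pair $\{i,j\}$ with $i\prec_\chi j$ yields the signed sum
\[
\varphi(x_{[i,j]_\chi^c})\varphi(x_{[i,j]_\chi}) - \varphi(x_{[i,j)_\chi^c})\varphi(x_{[i,j)_\chi}) - \varphi(x_{(i,j]_\chi^c})\varphi(x_{(i,j]_\chi}) + \varphi(x_{(i,j)_\chi^c})\varphi(x_{(i,j)_\chi}),
\]
exactly matching the off-diagonal $(i,j)$ contribution of $\taur_{\makeaball{1}}$. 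Summing over $i$ and unordered pairs $\{i,j\}$ gives the claim.

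The main obstacle is the careful bookkeeping of $\chi$-positions and signs for the $\sigma_i^\pm$, particularly for right-face indices where product-order and $\chi$-order disagree; once the universal sign pattern $\pm S_{\chi(i)}$ at $P_i^\mp$ is established, the matching of the four interval terms from Type~II against the four intervals $[i,j]_\chi, [i,j)_\chi, (i,j]_\chi, (i,j)_\chi$ of $\taur_{\makeaball{1}}$ is mechanical.
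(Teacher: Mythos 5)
Your proof is correct and follows essentially the same path as the paper's: apply Lemma~\ref{lem:ubmest} to replace each $U_{\chi(i)}(t)^{\pm 1}$ by $(1-t/2)+i\psi\sqrt{t}\,S_{\chi(i)}$, observe the result is a power series in $t$, and match the linear-in-$t$ coefficient against $\taur_{\makeaball{1}}$ by pairing the semicircular insertions and reading off signed $\chi$-intervals. Your split into Type~I (the $O(t)$ coefficient of a single $\tilde z_i$, combining the two $-t/2$ selections with the $S$-pairing across that $x_i$) and Type~II (the $O(\sqrt t)$ cross-terms from two distinct $\tilde z_i,\tilde z_j$) is just a slightly different bookkeeping of the same contributions the paper groups as ``$S$-pair selections'' versus ``$-t/2$ selections''; your accounting has the minor advantage of making the match with the diagonal $i=j$ term of $\taur$ (namely $\varphi(x_{\{i\}^c})\varphi(x_i) - \varphi(x_1\cdots x_n)$) fully explicit, where the paper's phrasing is terser on that point.
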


\begin{proof}
	We first apply Lemma~\ref{lem:ubmest} to replace $U_\ell(t)^{\pm1}$ by $1-\frac{t}{2} \pm i\sqrt{t}S_\ell$ and $U_r(t)^{\pm1}$ by $1-\frac{t}{2}\mp i\sqrt{t}S_r$, for some $(S_\ell, S_r)$ bi-free from $(\A_\ell, \A_r)$ as in Lemma~\ref{lem:ubmest}.
	Again, as the distribution of $(S_\ell, S_r)$ matches that of $(-S_\ell, -S_r)$, we find that we are dealing with a power series in $t$; further, it is evident that the constant term is correct.
	We therefore consider contributions to the linear term.
	
	However, note that these precisely correspond to the terms in the definition of $\taur_{\makeaball{1}}$.
	Indeed, we notice that when $i \prec_\chi j$ with $\e(i) = \e(j) = \makeaball{1}$, selecting the $S$ terms on either side of $x_i$ and $x_j$ contribute a total of
	$$t\paren{\varphi(x_{[i,j]_\chi^c})\varphi(x_{[i,j]_\chi}) - \varphi(x_{[i,j)_\chi^c})\varphi(x_{[i,j)_\chi})
			- \varphi(x_{(i,j]_\chi^c})\varphi(x_{(i,j]_\chi}) + \varphi(x_{(i,j)_\chi^c})\varphi(x_{(i,j)_\chi})}.$$
	The signs occur because the signs of $S$'s $\chi$-before their respective elements, or $\chi$-after, always match.
	This accounts for all the contributions coming from selecting two semicircular variables when expanding the product; what's left are the terms corresponding to selecting a $-\frac{t}{2}$ term, so each $x_i$ coming from \makeaball{1} winds up contributing $-t\varphi(x_1\cdots x_n)$ in total.
	Yet this precisely matches the contribution to $\taur_{\makeaball{1}}$ corresponding to selecting the empty terms with $i = j$.
	We conclude that the linear term in $\varphi(z_1(t)\cdots z_n(t))$ is precisely $t\varphi\otimes\varphi\paren{\taur_{\makeaball{1}}(x_1\cdots x_n)}$.
\end{proof}

\begin{rem}
	In \cite{Voiculescu1999101}, Voiculescu used the free liberation process to define the liberation gradient and a mutual non-microstates free entropy.
	We intend to pursue the bi-free analogue of this approach in a future paper.
\end{rem}

\begin{bibdiv}
\begin{biblist}

\bib{bercovici1992levy}{article}{
      author={Bercovici, Hari},
      author={Voiculescu, Dan-Virgil},
       title={{L}{\'e}vy-{H}in{\v{c}}in type theorems for multiplicative and
  additive free convolution},
        date={1992},
     journal={Pacific journal of mathematics},
      volume={153},
      number={2},
       pages={217\ndash 248},
}

\bib{biane1997free}{article}{
      author={Biane, Philippe},
       title={Free {Brownian} motion, free stochastic calculus and random
  matrices.},
        date={1997},
     journal={Fields Inst. Commun},
      volume={12},
}

\bib{bozejko1996convolution}{article}{
      author={Bo{\.z}ejko, Marek},
      author={Leinert, Michael},
      author={Speicher, Roland},
       title={Convolution and limit theorems for conditionally free random
  variables},
        date={1996},
     journal={Pacific Journal of Mathematics},
      volume={175},
      number={2},
       pages={357\ndash 388},
}

\bib{Charlesworth:2015aa}{article}{
      author={Charlesworth, Ian},
      author={Nelson, Brent},
      author={Skoufranis, Paul},
       title={Combinatorics of bi-freeness with amalgamation},
        date={2015},
        ISSN={0010-3616},
     journal={Communications in Mathematical Physics},
      volume={338},
      number={2},
       pages={801\ndash 847},
         url={http://dx.doi.org/10.1007/s00220-015-2326-8},
}

\bib{2014arXiv1403.4907C}{article}{
      author={Charlesworth, Ian},
      author={Nelson, Brent},
      author={Skoufranis, Paul},
       title={On two-faced families of non-commutative random variables},
        date={2015-12},
     journal={Canadian Journal of Mathematics},
      volume={67},
       pages={1290\ndash 1325},
}

\bib{gu2015analogue}{article}{
      author={Gu, Yinzheng},
      author={Huang, Hao-Wei},
      author={Mingo, James~A.},
       title={An analogue of the {L}\'{e}vy-{H}in\v{c}in formula for bi-free
  infinitely divisible distributions},
        date={2015},
     journal={arXiv preprint arXiv:1501.05369},
}

\bib{gu2016conditionally}{article}{
      author={Gu, Yinzheng},
      author={Skoufranis, Paul},
       title={Conditionally bi-free independence for pairs of algebras},
        date={2016},
     journal={arXiv preprint arXiv:1609.07475},
}

\bib{huang2016analytic}{article}{
      author={Huang, Hao-Wei},
      author={Wang, Jiun-Chau},
       title={Analytic aspects of the bi-free partial ${R}$-transform},
        date={2016},
     journal={Journal of Functional Analysis},
      volume={271},
      number={4},
       pages={922\ndash 957},
}

\bib{mastnak2015double}{article}{
      author={Mastnak, Mitja},
      author={Nica, Alexandru},
       title={Double-ended queues and joint moments of left-right canonical
  operators on full {F}ock space},
        date={2015},
     journal={International Journal of Mathematics},
      volume={26},
      number={02},
       pages={1550016},
}

\bib{skoufranis2015combinatorial}{article}{
      author={Skoufranis, Paul},
       title={A combinatorial approach to {Voiculescu}'s bi-free partial
  transforms},
        date={2015},
     journal={arXiv preprint arXiv:1504.06005},
}

\bib{skoufranis2015some}{article}{
      author={Skoufranis, Paul},
       title={Some bi-matrix models for bi-free limit distributions},
        date={2015},
     journal={arXiv preprint arXiv:1506.01725},
}

\bib{skoufranis2016independences}{inproceedings}{
      author={Skoufranis, Paul},
       title={Independences and partial ${R}$-transforms in bi-free
  probability},
organization={Institut Henri Poincar{\'e}},
        date={2016},
   booktitle={Annales de l'institut henri poincar{\'e}, probabilit{\'e}s et
  statistiques},
      volume={52},
       pages={1437\ndash 1473},
}

\bib{Voiculescu1999101}{article}{
      author={Voiculescu, Dan-Virgil},
       title={The analogues of entropy and of {F}isher's information measure in
  free probability theory: {VI}. {L}iberation and mutual free information},
        date={1999},
        ISSN={0001-8708},
     journal={Advances in Mathematics},
      volume={146},
      number={2},
       pages={101 \ndash  166},
  url={http://www.sciencedirect.com/science/article/pii/S0001870898918199},
}

\bib{voiculescu2014free}{article}{
      author={Voiculescu, Dan-Virgil},
       title={Free probability for pairs of faces {I}},
        date={2014},
     journal={Communications in Mathematical Physics},
      volume={332},
      number={3},
       pages={955\ndash 980},
}

\bib{voiculescu2016free}{inproceedings}{
      author={Voiculescu, Dan-Virgil},
       title={Free probability for pairs of faces {II}: 2-variables bi-free
  partial {$ R $}-transform and systems with rank $\leq1 $ commutation},
organization={Institut Henri Poincar{\'e}},
        date={2016},
   booktitle={Annales de l'{I}nstitut {H}enri {P}oincar\'e, {P}robabilit\'es et
  {S}tatistiques},
      volume={52},
       pages={1\ndash 15},
}

\bib{voiculescu2016freeiii}{article}{
      author={Voiculescu, Dan-Virgil},
       title={Free probability for pairs of faces {III}: 2-variables bi-free
  partial ${S}$-\ and ${T}$-transforms},
        date={2016},
     journal={Journal of Functional Analysis},
      volume={270},
      number={10},
       pages={3623\ndash 3638},
}

\bib{voiculescu1992free}{book}{
      author={Voiculescu, Dan-Virgil},
      author={Dykema, Ken~J.},
      author={Nica, Alexandru},
       title={Free random variables},
   publisher={American Mathematical Soc.},
        date={1992},
      number={1},
}

\end{biblist}
\end{bibdiv}

$$\eighthnote$$

\end{document}